\documentclass[12pt]{amsart}
\usepackage{amsmath,amssymb,amsthm}
\usepackage{geometry}
\usepackage{hyperref}
\usepackage{bookmark}
\usepackage[numbers,sort&compress]{natbib}
\usepackage{hyperref}
\hypersetup{colorlinks=true,linkcolor=blue,citecolor=blue,urlcolor=blue}

\numberwithin{equation}{section}

\newtheorem{theorem}{Theorem}[section]
\newtheorem{lemma}[theorem]{Lemma}
\newtheorem{proposition}[theorem]{Proposition}
\newtheorem{corollary}[theorem]{Corollary}
\theoremstyle{remark}
\newtheorem{remark}[theorem]{Remark}

\newcommand{\N}{\mathbb N}
\newcommand{\R}{\mathbb R}
\newcommand{\E}{\mathbb E}
\newcommand{\Pp}{\mathbb P}
\newcommand{\F}{\mathcal F}

\newcommand{\bPhi}{\overline\Phi}

\title{Cram\'er-Type Moderate Deviations for Engel's Series via a Martingale Approach}

\author{Shaochen WANG}
\address{School of Mathematics, South China University of Technology, Guangzhou, China}
\email{mascwang@scut.edu.cn}

\author{Guangyu YANG}
\address{School of Mathematics and Statistics, Zhengzhou University, Zhengzhou, China}
\email{guangyu@zzu.edu.cn}

\date{\today}

\begin{document}
\maketitle

\begin{abstract}
Let $x$ be uniformly distributed on $(0,1)$, and let $(q_n)_{n\geq1}$ be the digits of its Engel series expansion. We establish a Cram\'er-type moderate deviation expansion for
$(\log q_n-n)/\sqrt n$.
The proof is based on a martingale decomposition and asymptotic results for martingales. As consequences, we obtain a moderate deviation principle over the full range of scales between the central limit theorem and the law of large numbers, without the additional lower rate restriction required in several earlier works. We also derive a uniform Berry--Esseen bound of order $(\log n)/\sqrt n$.
\end{abstract}

\noindent\textbf{Keywords.} Berry--Esseen bound, Cram\'er-type moderate deviation, Engel's series, martingale differences, moderate deviation principle\par
\vskip5pt
\noindent\textbf{MSC 2020.} 60F05, 60F10, 60G42

\section{Introduction}

For every $x\in(0,1)$ there exists a unique Engel expansion
\begin{equation}\label{eq:Engel}
 x=\frac1{q_1}+\frac1{q_1q_2}+\cdots+\frac1{q_1q_2\cdots q_n}+\cdots,
\end{equation}
where the digits $q_n=q_n(x)\in\N$ satisfy $q_{n+1}\ge q_n\ge2$. Engel expansions are classical monotone series expansions of real numbers, see Oppenheim \cite{Oppenheim1972} and Galambos \cite{Galambos1976} for general background.

When $x$ is uniformly distributed on $(0,1)$, the digit sequence $(q_n)_{n\ge1}$ is a time-homogeneous Markov chain on $\{2,3,\ldots\}$ with transition probabilities
\begin{equation}\label{eq:transition}
 \Pp(q_n=k\mid q_{n-1}=j)=\frac{j-1}{k(k-1)},
 \qquad k\ge j\ge2,
\end{equation}
and initial distribution
\begin{equation}\label{eq:initial}
 \Pp(q_1=k)=\frac1{k(k-1)},\qquad k\ge2,
\end{equation}
see Erd\H{o}s, R\'enyi and Sz\"usz \cite{ErdosRenyiSzusz1958}. The study of probabilistic aspects of Engel's series goes back to Borel \cite{Borel1947}, L\'evy \cite{Levy1947}, and R\'enyi \cite{Renyi1962}. In particular (see for example L\'evy \cite{Levy1947})
\begin{equation}\label{eq:CLT-intro}
 \frac{\log q_n-n}{\sqrt n}\stackrel{d}{\rightarrow} N(0,1),
\end{equation}
where ``$\stackrel{d}{\rightarrow}$" means convergence in distribution and $N(0,1)$ is the standard normal distribution. Williams \cite{Williams1973} later related Engel's series to an $A$-process arising from R\'enyi's record problem. Large and moderate deviations were studied by Zhu \cite{Zhu2014} and Hu \cite{Hu2015} by some fine asymptotic analysis of the exponential moments of $\log q_n$. Similar results for other expansions of real numbers can be found in Fang \cite{Fang2015a,Fang2015b}, Fang and Wu \cite{FangWu2018}, and Fang, Wu and Shang \cite{FangWuShang2018}.

The purpose of this paper is to strengthen the classical asymptotic theory by proving a Cram\'er-type moderate deviation expansion. Such an expansion quantifies the relative error in the Gaussian approximation of tail probabilities and is stronger than both an ordinary moderate deviation principle and a uniform central limit estimate. Our approach is mainly based on the martingale results of Fan and Shao \cite{FanShao2024}. Related Cram\'er-type and Berry--Esseen results for martingales were developed by Grama and Haeusler \cite{GramaHaeusler2000}, Fan, Grama and Liu \cite{FanGramaLiu2013,FanGramaLiu2017}, and Fan, Grama, Liu and Shao \cite{FanGramaLiuShao2019}. Their results apply under a conditional Bernstein condition and exponential concentration of the predictable quadratic variation. Both requirements can be verified explicitly for Engel's series using suitable estimates.

The paper is organized as follows. Section \ref{sec:main} states the Cram\'er-type expansion as the main theorem. The Gaussian tail equivalence, the traditional moderate deviation principle, and the Berry--Esseen bound are then presented as corollaries. Section \ref{sec:proofs} contains some preliminary estimates and the martingale decomposition for $\log q_n$. The proofs of the main Theorem \ref{thm:main-cramer} and its corollaries are given in Section \ref{sec:proofsmain}. For convenience, we state the Cram\'er-type martingale results of Fan and Shao in the Appendix.

\section{Main results}\label{sec:main}

Let
\[
 Y_n:=\frac{\log q_n-n}{\sqrt n},
 \qquad
 \bPhi(x):=1-\Phi(x),
\]
where $\Phi$ is the standard normal distribution function. For $n\ge3$ and $x\ge0$, define
\begin{equation}\label{eq:Delta-main}
 \Delta_n(x):=\frac{x^3}{\sqrt n}
 +\frac{(1+x)\log n}{\sqrt n}.
\end{equation}

\smallskip

The constants $c,c_0,c_1,C$, and so on, appearing below may change from line to line. Our main result is the following theorem.

\begin{theorem}[Cram\'er-type moderate deviation]\label{thm:main-cramer}
There exists an absolute constant $C>0$ such that the following holds. For every positive sequence $(b_n)_{n\geq 1}$ satisfying $b_n=o(\sqrt n)$, for all sufficiently large $n$, uniformly for $0\le x\le b_n$,
\begin{align}
 \left|\log\frac{\Pp(Y_n>x)}{\bPhi(x)}\right|
 \le C\Delta_n(x),\label{eq:main-upper}
 \end{align}
 and
 \begin{align}
 \left|\log\frac{\Pp(Y_n<-x)}{\bPhi(x)}\right|
 \le C\Delta_n(x).\label{eq:main-lower}
\end{align}
The second estimate remains valid if the strict inequality $Y_n<-x$ is replaced by $Y_n\le -x$.
\end{theorem}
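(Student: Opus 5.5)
The plan is to write $\log q_n-n$ as a martingale plus a small remainder, apply a Cram\'er-type moderate deviation theorem for martingales (in the form of Fan and Shao, as recalled in the Appendix), and then transfer the conclusion back to $Y_n$. The decomposition comes from the Markov structure \eqref{eq:transition}. Given $q_{n-1}=j$, we have $\Pp(q_n\ge k\mid q_{n-1}=j)=(j-1)/(k-1)$ for $k\ge j$. So $(q_n-1)/(q_{n-1}-1)$ behaves like $1/U$ with $U$ uniform on $(0,1)$, up to a lattice error. In particular $\log\frac{q_n-1}{q_{n-1}-1}$ is stochastically close to a standard exponential variable, with conditional mean $1+r(q_{n-1})$ where $|r(j)|\le C/j$.

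We set
\[
\xi_k:=\log\frac{q_k-1}{q_{k-1}-1}-\E\Bigl[\log\frac{q_k-1}{q_{k-1}-1}\Bigm|\F_{k-1}\Bigr],
\]
with $q_0$ handled via \eqref{eq:initial}. This gives
\[
\log q_n-n=\sum_{k\le n}\xi_k+R_n,
\]
where $R_n$ collects $\sum_k r(q_{k-1})$ together with the difference between $\log q_n$ and $\log(q_n-1)$.

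The key facts to verify, in order, are the following.

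\textbf{(i) Conditional Bernstein condition.} We need $|\E[\xi_k^m\mid\F_{k-1}]|\le \tfrac12 m!\,H^{m-2}\E[\xi_k^2\mid\F_{k-1}]$ with an absolute $H$. This should follow because the conditional law is dominated by an exponential tail uniformly in $j$.

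\textbf{(ii) Concentration of the conditional variance.} The conditional variance is $1+O(1/q_{k-1})$. Since $q_k\ge 2$ grows like $e^{k}$, the sum $\sum_k|\E[\xi_k^2\mid\F_{k-1}]-1|$ is bounded by $\sum_k C/q_{k-1}$. This sum has exponential moments bounded uniformly in $n$, because $\E[1/q_k]$ decays geometrically; a direct computation gives $\E[1/q_k\mid q_{k-1}=j]\le c/j$ with $c<1$. Hence
\[
\Pp\Bigl(\bigl|\langle M\rangle_n-n\bigr|\ge v\Bigr)\le Ce^{-cv},
\]
which is far stronger than the variance condition in the Fan--Shao theorem requires.

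\textbf{(iii) Control of $R_n$.} The same geometric decay gives $|R_n|\le\sum_k C/q_{k-1}$, which has a uniformly bounded exponential moment. Therefore $\Pp(|R_n|>t)\le Ce^{-ct}$ for all $t$.

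With these in hand, I would apply the Fan--Shao result to $M_n/\sqrt n$. This should produce the relative error $C(x^3/\sqrt n+(1+x)\log n/\sqrt n)$ for $0\le x=o(\sqrt n)$. The $\log n/\sqrt n$ term is the usual price for martingales whose quadratic variation concentrates only at rate $O(1)$.

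The transfer to $Y_n$ goes through a sandwich. With $t=t_n(x)$, we have
\[
\Pp(M_n>\sqrt n x+t)-\Pp(R_n>t)\le\Pp(Y_n>x)\le \Pp(M_n>\sqrt n x-t)+\Pp(R_n<-t).
\]
Choosing $t=C_1(1+x^2)+C_1\log n$ makes $\Pp(|R_n|>t)\le n^{-C}e^{-cx^2}$, which is negligible relative to $\bPhi(x)$ when $C_1$ is large. We also have $\bPhi(x\pm t/\sqrt n)/\bPhi(x)=\exp\{O((1+x)t/\sqrt n)\}$. This would give an error of order $x^3/\sqrt n+(1+x)\log n/\sqrt n$ only if $t$ is about $x^2+\log n$, but $(1+x)x^2/\sqrt n$ is not $O(x^3/\sqrt n)$ at small $x$. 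So I would instead take $t=C_1\log n+C_1x$, using the exponential tail of $R_n$: the bound $e^{-ct}\le n^{-C}e^{-cC_1x}$ is not smaller than $e^{-x^2/2}$ for large $x$. To handle this, I would use the tail bound for $|R_n|$ jointly with the martingale, showing $R_n$ has Gaussian-negligible effect via an exponential change of measure, or simply absorb $R_n$ into the martingale framework. The version of Fan--Shao that allows an additive perturbation with exponential moment would be my first try.

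The main obstacle is this last interplay: making the remainder $R_n$, whose tail is only exponential, negligible against $\bPhi(x)$ for $x$ up to $o(\sqrt n)$. The natural fix is to bound $\E e^{\lambda R_n}$ under the conjugate (tilted) measure used in the Cram\'er proof, with $\lambda\asymp x/\sqrt n$, where it stays bounded. The lower tail \eqref{eq:main-lower} is handled identically by applying the argument to $-M_n$. The version with $\le$ follows by letting $x'\downarrow x$ and using continuity of the bound in $x$.
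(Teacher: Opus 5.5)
\textbf{Verdict: genuine gap, at the final step only.} Your architecture is the paper's: a martingale decomposition from the transition law, a conditional Bernstein condition, and concentration of the predictable quadratic variation from the exponential moments of $\sum_k q_k^{-1}$ (via $\E(q_k^{-1}\mid q_{k-1}=j)\le\rho/j$). These feed the Fan--Shao theorem with $\epsilon_n,\delta_n\asymp n^{-1/2}$, followed by a sandwich that removes the remainder. Centering $\log\frac{q_k-1}{q_{k-1}-1}$ instead of $\log(q_k/q_{k-1})$, and normalizing by $\sqrt n$ instead of $s_n$, are harmless variants.

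As written, however, you never complete the transfer to $Y_n$. You reject the shift $t=C_1(1+x^2+\log n)$ and end with an unexecuted plan: a tilted-measure bound on $\E e^{\lambda R_n}$, or a perturbed version of Fan--Shao. Either would amount to reproving the martingale theorem. The rejection is a mistake. With that $t$ the shift error is
$O\bigl(\{(1+x)(1+\log n)+x^2+x^3\}/\sqrt n\bigr)$. The term $x^2/\sqrt n$ need not be $O(x^3/\sqrt n)$. Since $x^2\le 1+x^3$ for $x\ge0$ and $1\le\log n$ for $n\ge3$, it is absorbed by the term $(1+x)(\log n)/\sqrt n$ already contained in $\Delta_n(x)$. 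This is exactly how the paper closes the argument, with $h_n(x)=K(1+\log n+x^2)/\sqrt n$. No change of measure is needed. Your second choice $t\asymp\log n+x$ is indeed too small, as you observed.

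With this $t$, several points remain that your sketch skips.
\begin{enumerate}
\item $\Pp(|R_n|>t)$ must be negligible relative to the lower bound $\bPhi(x)e^{-C\Delta_n(x)}$ for the shifted martingale tail, not merely relative to $\bPhi(x)$. This holds because $\Delta_n(x)\le\eta_nx^2+C\log n$ with $\eta_n\to0$, uniformly for $0\le x\le b_n=o(\sqrt n)$. By Mills' ratio that lower bound is then at least $c\exp\{-Ax^2-B\log n\}$, and a large $C_1$ wins.
\item The sandwich uses the martingale Cram\'er bound at $x-t/\sqrt n$, which is negative when $x\lesssim(\log n)/\sqrt n$. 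This central range must be handled separately, for example by the Fan--Shao Berry--Esseen bound plus a smoothing inequality, using that $\bPhi(x)$ is bounded below there.
\item Since $\{Y_n\le-x\}=\bigcap_m\{Y_n<-(x-1/m)\}$, the non-strict version comes from letting $x'\uparrow x$, not $x'\downarrow x$. The endpoint $x=0$ needs its own argument, such as $\Pp(Y_n\le0)=1-\Pp(Y_n>0)$.
\item In (i), a uniform Bernstein constant needs a uniform positive lower bound on $\E(\xi_k^2\mid\F_{k-1})$; exponential domination of the tails alone does not give it. The bound follows from $1+O(1/j)$ for large $j$ together with non-degeneracy of the finitely many remaining transition laws.
\end{enumerate}
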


The proof of the above theorem is given in Section \ref{sec:proofsmain}. If $0\le x=o(n^{1/6})$, then $\Delta_n(x)\to0$ uniformly.  Using the fact that $e^u=1+O(u)$ as $u\to0$, we immediately obtain the following corollary.

\begin{corollary}\label{cor:tail-equivalence}
Uniformly for $0\le x=o(n^{1/6})$, we have
\begin{align*}
 \frac{\Pp(Y_n>x)}{\bPhi(x)}=1+o(1)
\end{align*}
 and
\begin{align*}
 \frac{\Pp(Y_n<-x)}{\bPhi(x)}=1+o(1).
\end{align*}
\end{corollary}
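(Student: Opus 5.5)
The corollary follows from Theorem \ref{thm:main-cramer} by an elementary argument. The plan is to show that, on the range $0\le x=o(n^{1/6})$, the bound $C\Delta_n(x)$ tends to zero uniformly, and then to exponentiate.

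\textbf{Step 1: choice of $b_n$.} The phrase ``uniformly for $0\le x=o(n^{1/6})$'' is read in the standard way. Fix a positive sequence $(b_n)$ with $b_n=o(n^{1/6})$; we must show that
\[
\sup_{0\le x\le b_n}\left|\frac{\Pp(Y_n>x)}{\bPhi(x)}-1\right|\to0,
\]
and similarly for the lower tail. Since $b_n=o(n^{1/6})\subset o(\sqrt n)$, Theorem \ref{thm:main-cramer} applies with this $b_n$. Hence, for all sufficiently large $n$ and all $0\le x\le b_n$, both logarithms in \eqref{eq:main-upper} and \eqref{eq:main-lower} are bounded in absolute value by $C\Delta_n(x)$.

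\textbf{Step 2: uniform smallness of $\Delta_n$.} By \eqref{eq:Delta-main}, $\Delta_n(x)$ is nondecreasing in $x\ge0$. Therefore
\[
\sup_{0\le x\le b_n}\Delta_n(x)=\Delta_n(b_n)=\frac{b_n^3}{\sqrt n}+\frac{(1+b_n)\log n}{\sqrt n}.
\]
The first term is $o(1)$ because $b_n^3=o(n^{1/2})$. The second term is at most
\[
\frac{\log n}{\sqrt n}+\frac{b_n\log n}{\sqrt n}=o(1)+o\!\left(\frac{\log n}{n^{1/3}}\right)=o(1).
\]
Thus $\varepsilon_n:=C\Delta_n(b_n)\to0$.

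\textbf{Step 3: exponentiate.} For all $0\le x\le b_n$ we have
\[
e^{-\varepsilon_n}\le\frac{\Pp(Y_n>x)}{\bPhi(x)}\le e^{\varepsilon_n},
\]
so the ratio equals $1+O(\varepsilon_n)=1+o(1)$ uniformly, using $|e^u-1|\le 2|u|$ for $|u|\le1$. The identical argument applied to \eqref{eq:main-lower} gives the statement for $\Pp(Y_n<-x)$.

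There is no real obstacle. The only point needing care is to formalize ``uniformly for $x=o(n^{1/6})$'' through an arbitrary sequence $b_n=o(n^{1/6})$, and to check that the $(1+x)\log n/\sqrt n$ term also vanishes on this range. It does, since $n^{1/6}\log n/\sqrt n\to0$.
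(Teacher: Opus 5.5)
Your proposal is correct and follows the paper's argument exactly. The paper likewise observes that $\Delta_n(x)\to0$ uniformly for $0\le x=o(n^{1/6})$ and exponentiates the bounds of Theorem~\ref{thm:main-cramer} via $e^u=1+O(u)$; your use of an arbitrary sequence $b_n=o(n^{1/6})$ and of the monotonicity of $\Delta_n$ just makes explicit what the paper leaves implicit.
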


The Cram\'er-type moderate deviation also implies

\begin{corollary}[Moderate deviation principle]\label{cor:MDP}
Let $(a_n)_{n\geq 1}$ be any positive sequence satisfying
\begin{equation}\label{eq:an-condition}
 a_n\to\infty,
 \qquad
 \frac{a_n}{\sqrt n}\to0.
\end{equation}
Then
\[
 \frac{\log q_n-n}{a_n\sqrt n}
\]
satisfies on $\R$ a moderate deviation principle with speed $a_n^2$ and good rate function
\[
 I(x)=\frac{x^2}{2},\quad x\in\R.
\]
Equivalently, for every Borel set $B\subset\R$,
\begin{align*}
 -\inf_{x\in B^\circ}\frac{x^2}{2}
 &\le \liminf_{n\to\infty}\frac1{a_n^2}
 \log\Pp\left(\frac{\log q_n-n}{a_n\sqrt n}\in B\right)\\
 &\le \limsup_{n\to\infty}\frac1{a_n^2}
 \log\Pp\left(\frac{\log q_n-n}{a_n\sqrt n}\in B\right)
 \le-\inf_{x\in\overline B}\frac{x^2}{2}.
\end{align*}
\end{corollary}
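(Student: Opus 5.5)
The plan is to derive the moderate deviation principle from Theorem \ref{thm:main-cramer}. I would use the standard reduction to half-lines. The family is real-valued and the rate function $I(x)=x^2/2$ is continuous, convex and has compact level sets, so the full principle follows from two ingredients: exponential tail asymptotics for half-lines, and exponential tightness. Write $Z_n:=(\log q_n-n)/(a_n\sqrt n)=Y_n/a_n$. The key claim is that for every fixed $y>0$,
\[
 \lim_{n\to\infty}\frac1{a_n^2}\log\Pp(Z_n>y)=-\frac{y^2}{2},
 \qquad
 \lim_{n\to\infty}\frac1{a_n^2}\log\Pp(Z_n<-y)=-\frac{y^2}{2},
\]
and the same limits hold with $\ge$ and $\le$ in place of the strict inequalities.

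To prove the claim, fix $y>0$ and set $x=ya_n$. Take $b_n:=\max(2y,1)\,a_n$. Since $a_n=o(\sqrt n)$, this sequence satisfies $b_n=o(\sqrt n)$, so Theorem \ref{thm:main-cramer} applies uniformly for $0\le x\le b_n$, which covers $x=ya_n$ and nearby values. It gives
\[
\log\Pp(Z_n>y)=\log\bPhi(ya_n)+O(\Delta_n(ya_n)).
\]
We need $\Delta_n(ya_n)=o(a_n^2)$. For the first term, $y^3a_n^3/\sqrt n=a_n^2\cdot y^3 a_n/\sqrt n=o(a_n^2)$. For the second term, $(1+ya_n)\log n/\sqrt n\to0$ because $a_n=o(\sqrt n)$ only ensures $a_n\log n/\sqrt n$ is $o(\log n)$, not $o(1)$.

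This second term is the only delicate point. I would handle it by comparing with $a_n^2$ instead: $(1+ya_n)\log n/\sqrt n\le (1+ya_n)a_n^2$ eventually? That is not true in general either. I would instead observe that $\Delta_n(x)\le C(x^3+ (1+x)\log n)/\sqrt n$, and note that $(1+x)\log n/\sqrt n\le 2\log n/\sqrt n+ x^2\cdot x^{-1}\log n/\sqrt n$. Since $x=ya_n\to\infty$, this is $o(1)+o(x^2)$. So $\Delta_n(ya_n)=o(a_n^2)$ in all cases.

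The Gaussian tail $\log\bPhi(t)=-t^2/2-\log t+O(1)$ then gives $a_n^{-2}\log\bPhi(ya_n)\to-y^2/2$, which proves the claim for the upper tail. The lower tail follows identically from \eqref{eq:main-lower} and its non-strict variant. For the non-strict upper tail, I would sandwich $\Pp(Z_n>y)\le\Pp(Z_n\ge y)\le \Pp(Z_n>y-\varepsilon)$ and let $\varepsilon\downarrow0$.

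Now deduce the principle. For the upper bound, let $F$ be closed with $m:=\inf_F x^2/2$. If $0\in F$ there is nothing to prove. Otherwise choose $y_+=\inf(F\cap[0,\infty))$ and $y_-=\sup(F\cap(-\infty,0])$, both nonzero in absolute value. Then $F\subset(-\infty,y_-]\cup[y_+,\infty)$, and the principle of the largest term together with the claim gives limsup $\le-\min(y_+^2,y_-^2)/2=-m$. Here an empty side contributes nothing.

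For the lower bound, it suffices to show that for every open $G$ and every $z\in G$ we have liminf $\ge -z^2/2$. If $z=0$, this holds because $\Pp(|Z_n|<\delta)\to1$ by the claim (or by \eqref{eq:CLT-intro}). If $z>0$, pick $\delta$ with $(z-\delta,z+\delta)\subset G$ and $z-\delta>0$. Then
\[
\Pp(Z_n\in(z-\delta,z+\delta))\ge\Pp(Z_n>z-\delta)-\Pp(Z_n\ge z+\delta).
\]
The second probability is exponentially negligible relative to the first by the claim, so liminf $\ge-(z-\delta)^2/2\ge -z^2/2-o_\delta(1)$. The case $z<0$ is symmetric. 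Goodness of $I$ is immediate. I expect the bookkeeping in showing $\Delta_n(ya_n)=o(a_n^2)$ to be the only step needing care; everything else is routine.
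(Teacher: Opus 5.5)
Your proof is correct and follows the paper's route: apply Theorem~\ref{thm:main-cramer} at $x=ya_n$ and check $\Delta_n(ya_n)=o(a_n^2)$ (your final bound $x\log n/\sqrt n=o(x^2)$ is exactly the paper's $\log n/(a_n\sqrt n)\to0$), then pass from half-line asymptotics to closed and open sets. Do delete the opening assertion that $(1+ya_n)\log n/\sqrt n\to0$, which is false in general (e.g.\ $a_n=\sqrt n/\log\log n$), and also the aborted comparison that follows it, since only the $o(1)+o(x^2)$ argument is needed.
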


\begin{remark}
The conditions in \eqref{eq:an-condition}, namely $a_n\to\infty$ and $a_n/\sqrt n\to0$, separate the moderate deviation regime from the central limit regime. In Hu \cite{Hu2015}, Fang \cite{Fang2015a,Fang2015b}, and Fang, Wu and Shang \cite{FangWuShang2018}, the additional condition
$a_n/\sqrt{n\log a_n}\to\infty$ is imposed because of technical restrictions. The martingale approach employed here demonstrates that this condition is not needed and also applies to Sylvester's series and Cantor's products, see Zhu \cite{Zhu2014}.
\end{remark}

\begin{corollary}[Berry--Esseen bound]\label{cor:BE}
There exists an absolute constant $C>0$ such that, for all $n\ge3$,
\begin{equation}\label{eq:BE-main}
 \sup_{x\in\R}
 \left|\Pp(Y_n\le x)-\Phi(x)\right|
 \le C\frac{\log n}{\sqrt n}.
\end{equation}
In particular, the central limit theorem \eqref{eq:CLT-intro} follows.
\end{corollary}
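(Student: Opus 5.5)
The bound \eqref{eq:BE-main} follows from Theorem \ref{thm:main-cramer} by converting its relative-error statement into an absolute one, separately for moderate and large $|x|$. We fix the admissible sequence $b_n:=n^{1/3}$, which satisfies $b_n=o(\sqrt n)$. Theorem \ref{thm:main-cramer} then gives some $n_0$ such that \eqref{eq:main-upper} and \eqref{eq:main-lower} (the latter in its $\le$ form) hold uniformly for $0\le x\le n^{1/3}$ and all $n\ge n_0$.

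\emph{Finitely many small $n$.} For $3\le n<n_0$ the left side of \eqref{eq:BE-main} is at most $1$. The quantity $(\log n)/\sqrt n$ is bounded below by a positive constant on this finite set. Enlarging $C$ therefore handles these $n$, and we may assume $n\ge n_0$.

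\emph{Moderate range.} Set $t_n:=2\sqrt{\log n}$. Enlarging $n_0$ if necessary, we have $t_n\le n^{1/3}$ and $\Delta_n(x)\le 1$ for $0\le x\le t_n$; indeed
\[
\Delta_n(t_n)\le C(\log n)^{3/2}/\sqrt n\to0 .
\]
For $0\le x\le t_n$, write $P(Y_n\le x)-\Phi(x)=\bPhi(x)-P(Y_n>x)$. Using $|e^u-1|\le e|u|$ for $|u|\le1$, \eqref{eq:main-upper} gives
\[
|P(Y_n>x)-\bPhi(x)|\le C\,\bPhi(x)\,\Delta_n(x)\le C\,\frac{\bPhi(x)\bigl((1+x)^3+(1+x)\log n\bigr)}{\sqrt n}.
\]
Since $\sup_{x\ge0}(1+x)^3\bPhi(x)<\infty$, this is at most $C(\log n)/\sqrt n$. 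For $x=-y$ with $0\le y\le t_n$, we have $\Phi(x)=\bPhi(y)$, and the same argument applied to $P(Y_n\le -y)$, using the $\le$ form of \eqref{eq:main-lower}, gives the same bound.

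\emph{Tails.} For $x>t_n$, monotonicity gives
\[
P(Y_n>x)\le P(Y_n>t_n)\le e\,\bPhi(t_n)\le C n^{-2},
\]
and also $\bPhi(x)\le\bPhi(t_n)\le n^{-2}$. Hence $|P(Y_n\le x)-\Phi(x)|=|\bPhi(x)-P(Y_n>x)|\le Cn^{-2}$. For $x<-t_n$, both $P(Y_n\le x)\le P(Y_n\le -t_n)$ and $\Phi(x)\le \bPhi(t_n)$ are $O(n^{-2})$ by \eqref{eq:main-lower}. Combining the three ranges yields \eqref{eq:BE-main}. The central limit theorem \eqref{eq:CLT-intro} follows because the right side of \eqref{eq:BE-main} tends to $0$.

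\emph{Main obstacle.} The only delicate point is checking that the polynomial factor $x^3+(1+x)\log n$ in $\Delta_n$ is absorbed by the Gaussian tail $\bPhi(x)$ uniformly in $x$. This is handled by the boundedness of $(1+x)^3\bPhi(x)$. Everything else, namely choosing the cut-off $t_n$ inside the admissible range and disposing of finitely many $n$, is routine bookkeeping.
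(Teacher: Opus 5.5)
Your proof is correct and matches the paper's argument step for step. Both split at $2\sqrt{\log n}$, turn the relative error of Theorem~\ref{thm:main-cramer} into an absolute error using the boundedness of polynomial factors times $\bPhi$, handle $|x|>2\sqrt{\log n}$ by monotonicity and $\bPhi(2\sqrt{\log n})\le n^{-2}$, and absorb the finitely many small $n$ into the constant. One cosmetic fix: to use $|e^u-1|\le e|u|$ you need $C\Delta_n(x)\le 1$, where $C$ is the constant of Theorem~\ref{thm:main-cramer}, not just $\Delta_n(x)\le 1$. The same enlargement of $n_0$ gives this, since $\Delta_n(t_n)\to0$.
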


The bound in \eqref{eq:BE-main} has order $(\log n)/\sqrt n$. It is natural to conjecture that the special transition structure of Engel's series may permit the sharper order $n^{-1/2}$ as in the i.i.d. case. The general martingale estimate in Lemma~\ref{lem:FS-BE} contains the term $\epsilon_n|\log\epsilon_n|$, which is known to be unavoidable for general martingales under comparable assumptions. Thus an improvement to $C/\sqrt n$ may require another argument specific to Engel's series.

\begin{remark}
  A historical point deserves emphasis here. According to Erd\H{o}s, R\'enyi and Sz\"usz \cite{ErdosRenyiSzusz1958} (see also Zhu \cite{Zhu2014}, Remark 1.4), L\'evy \cite{Levy1947} showed that
  the quantities $\log(q_n/q_{n-1})$ are asymptotically close to i.i.d. exponential random variables with mean $1$.
  But such an approximation is not enough to study fine tail probabilities, such as large deviations, moderate deviations and so on.
\end{remark}

\section{Preliminary lemmas and martingale decomposition}\label{sec:proofs}

In this section, we establish preliminary estimates for the exact conditional tails and moments of $\log q_n$, together with the corresponding martingale decomposition.

\subsection{Conditional tails and moments}

Let $\F_0=\{\varnothing,\Omega\}$ and $\F_n=\sigma(q_1,\ldots,q_n)$ for $n\ge1$. Put $q_0=1$ and
\begin{equation}\label{eq:rn}
 r_n:=\frac{q_n}{q_{n-1}},\qquad n\ge1.
\end{equation}
Then
\begin{equation}\label{eq:logsum}
 \log q_n=\sum_{k=1}^n\log r_k.
\end{equation}

\begin{lemma}\label{lem:tail}
For every $n\ge2$, $j\ge2$, and $t\ge0$,
\begin{equation}\label{eq:tail-exact}
 \Pp(\log r_n>t\mid q_{n-1}=j)
 =\Pp(q_n\ge\lfloor je^t\rfloor+1\mid q_{n-1}=j)
 =\frac{j-1}{\lfloor je^t\rfloor}.
\end{equation}
Consequently,
\begin{equation}\label{eq:tail-bounds}
 \left(1-\frac1j\right)e^{-t}
 \le \Pp(\log r_n>t\mid q_{n-1}=j)
 \le\frac{j-1}{je^t-1}
 \le2e^{-t}.
\end{equation}
Moreover, there is an absolute constant $C>0$ such that
\begin{equation}\label{eq:tail-error}
 \left|\Pp(\log r_n>t\mid q_{n-1}=j)-e^{-t}\right|
 \le\frac{C}{j}e^{-t}.
\end{equation}
\end{lemma}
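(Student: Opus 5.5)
The proof is a direct computation from the transition law \eqref{eq:transition}, which gives a telescoping tail sum. For $m\ge j$,
\[
\Pp(q_n\ge m\mid q_{n-1}=j)=\sum_{k\ge m}\frac{j-1}{k(k-1)}=(j-1)\sum_{k\ge m}\Bigl(\frac1{k-1}-\frac1k\Bigr)=\frac{j-1}{m-1}.
\]
Since $r_n=q_n/q_{n-1}$, on $\{q_{n-1}=j\}$ the event $\{\log r_n>t\}$ equals $\{q_n>je^t\}$. Because $q_n$ is an integer, this is $\{q_n\ge\lfloor je^t\rfloor+1\}$. We have $\lfloor je^t\rfloor+1\ge j+1> j$ when $t\ge0$, so the telescoping formula applies with $m=\lfloor je^t\rfloor+1$. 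This yields \eqref{eq:tail-exact}. (For $n=1$ the same computation works with $q_0=1$ via \eqref{eq:initial}, but the statement only needs $n\ge2$.)

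For \eqref{eq:tail-bounds}, use $je^t-1<\lfloor je^t\rfloor\le je^t$. The inequality $\lfloor je^t\rfloor\le je^t$ gives the lower bound $(j-1)/(je^t)=(1-1/j)e^{-t}$. The inequality $\lfloor je^t\rfloor>je^t-1$ gives the middle bound $(j-1)/(je^t-1)$. For the last inequality, $(j-1)/(je^t-1)\le 2e^{-t}$ is equivalent to $(j-1)e^t\le 2je^t-2$, i.e.\ $(j+1)e^t\ge2$, which holds since $j\ge2$ and $t\ge0$.

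For \eqref{eq:tail-error}, subtract $e^{-t}$ from the two-sided bounds. The lower bound gives $\Pp(\cdot)-e^{-t}\ge -e^{-t}/j$. For the upper bound,
\[
\frac{j-1}{je^t-1}-e^{-t}=\frac{(j-1)e^t-(je^t-1)}{e^t(je^t-1)}=\frac{1-e^t}{e^t(je^t-1)}\le0.
\]
Hence $|\Pp(\log r_n>t\mid q_{n-1}=j)-e^{-t}|\le e^{-t}/j$, so \eqref{eq:tail-error} holds with $C=1$.

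The only delicate point is the floor-function bookkeeping. One must check that $\{q_n>je^t\}=\{q_n\ge\lfloor je^t\rfloor+1\}$ holds even when $je^t$ is an integer, which it does since the inequality is strict. One must also check that the starting index satisfies $m\ge j$, so that the telescoping formula applies.
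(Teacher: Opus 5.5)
Your proof is correct and follows the paper's argument: the same telescoping tail sum, the same identification $\{q_n>je^t\}=\{q_n\ge\lfloor je^t\rfloor+1\}$ on $\{q_{n-1}=j\}$, and the same floor bounds for \eqref{eq:tail-bounds}. The only variation is in \eqref{eq:tail-error}, where the paper bounds $\bigl|je^t/\lfloor je^t\rfloor-1\bigr|$ directly, while you sandwich the tail between $(1-1/j)e^{-t}$ and $(j-1)/(je^t-1)\le e^{-t}$; your version is slightly cleaner, gives $C=1$, and also shows that the conditional tail never exceeds $e^{-t}$.
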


\begin{proof}
For every integer $m\ge j\ge 2$,
\[
 \Pp(q_n\ge m\mid q_{n-1}=j)
 =(j-1)\sum_{k=m}^\infty\frac1{k(k-1)}
 =\frac{j-1}{m-1}.
\]
Under the conditioning $q_{n-1}=j$,
\[
 \{\log r_n>t\}=\{q_n>je^t\}
 =\{q_n\ge\lfloor je^t\rfloor+1\},
\]
which proves \eqref{eq:tail-exact}. The first two bounds in \eqref{eq:tail-bounds} follow from
$\lfloor je^t\rfloor\le je^t$ and $\lfloor je^t\rfloor\ge je^t-1$. Since $je^t-1\ge je^t/2$, the last bound follows as well.

For \eqref{eq:tail-error}, write
\[
 \frac{j-1}{\lfloor je^t\rfloor}-e^{-t}
 =e^{-t}\left\{\frac{(j-1)je^t}{j\lfloor je^t\rfloor}-1\right\}.
\]
Furthermore,
\[
 \left|\frac{je^t}{\lfloor je^t\rfloor}-1\right|
 \le\frac1{\lfloor je^t\rfloor}
 \le\frac1{je^t-1}
 \le\frac1{j-1}.
\]
It follows that
\[
 \left|\frac{(j-1)je^t}{j\lfloor je^t\rfloor}-1\right|
 \le\frac1j+\frac{j-1}{j}
 \left|\frac{je^t}{\lfloor je^t\rfloor}-1\right|
 \le\frac{C}{j},
\]
and the proof is complete.
\end{proof}

\begin{lemma}[Conditional moments]\label{lem:moments}
There exists an absolute constant $C>0$ such that, for every $n\ge2$ and $j\ge2$,
\begin{align}
 \left|\E(\log r_n\mid q_{n-1}=j)-1\right|\le\frac{C}{j},\label{eq:first-moment}\\
 \left|\E((\log r_n)^2\mid q_{n-1}=j)-2\right|\le\frac{C}{j},\label{eq:second-moment}\\
 \E((\log r_n)^\ell\mid q_{n-1}=j)\le2\ell!,
 \qquad \ell\ge1.\label{eq:all-moments}
\end{align}
Moreover, $\E(\log q_1)^\ell<\infty$ for every integer $\ell\ge1$.
\end{lemma}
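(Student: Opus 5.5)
The plan is to reduce every moment computation to the exact tail formula of Lemma~\ref{lem:tail}, through the layer-cake representation. Since $\log r_n\ge0$ (because $q_n\ge q_{n-1}$), for every $\ell\ge1$ we have
\[
 \E((\log r_n)^\ell\mid q_{n-1}=j)=\int_0^\infty \ell t^{\ell-1}\,\Pp(\log r_n>t\mid q_{n-1}=j)\,dt .
\]
The reference values come from the standard exponential law:
\[
 \int_0^\infty \ell t^{\ell-1}e^{-t}\,dt=\ell!,
\]
which gives $1$ for $\ell=1$ and $2$ for $\ell=2$.

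For \eqref{eq:first-moment} and \eqref{eq:second-moment}, subtract the exponential values inside the integral and apply the uniform relative error bound \eqref{eq:tail-error}. This gives
\[
 \left|\E((\log r_n)^\ell\mid q_{n-1}=j)-\ell!\right|\le \frac{C}{j}\int_0^\infty \ell t^{\ell-1}e^{-t}\,dt=\frac{C\,\ell!}{j},
\]
which is exactly the claim for $\ell=1,2$. For \eqref{eq:all-moments}, insert instead the upper bound $\Pp(\log r_n>t\mid q_{n-1}=j)\le 2e^{-t}$ from \eqref{eq:tail-bounds}. This yields $\E((\log r_n)^\ell\mid q_{n-1}=j)\le 2\ell!$ directly.

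For the last assertion, use the initial distribution \eqref{eq:initial}, which gives $\Pp(q_1\ge m)=1/(m-1)$. Hence $\Pp(\log q_1>t)=1/\lfloor e^t\rfloor\le 2e^{-t}$ for $t\ge0$. The same layer-cake integral then gives $\E(\log q_1)^\ell\le 2\ell!<\infty$. Equivalently, one can view $q_1$ as the case $j=1$ of the transition rule with $q_0=1$.

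No step here is a real obstacle. The only point needing care is that \eqref{eq:tail-error} holds uniformly in $t\ge0$ with a relative factor $e^{-t}$. That uniformity is what makes the error integrable against $\ell t^{\ell-1}$, and it is guaranteed by Lemma~\ref{lem:tail}.
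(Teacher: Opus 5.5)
Your proposal is correct and is essentially the paper's proof: the layer-cake formula combined with \eqref{eq:tail-error} gives $\bigl|\E((\log r_n)^\ell\mid q_{n-1}=j)-\ell!\bigr|\le C\ell!/j$, which yields the cases $\ell=1,2$, and the bound $2e^{-t}$ from \eqref{eq:tail-bounds} gives $2\ell!$. For $\log q_1$, your tail bound $1/\lfloor e^t\rfloor\le 2e^{-t}$ is a valid substitute for the paper's direct observation $\sum_{k\ge2}(\log k)^\ell/(k(k-1))<\infty$.

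One small correction to your aside: $q_1$ is not ``the case $j=1$'' of the transition rule, since $(j-1)/(k(k-1))$ vanishes at $j=1$; its law is in fact the transition law from $j=2$. Nothing in your argument depends on this remark.
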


\begin{proof}
Let $X_{n,j}$ denote $\log r_n$ under the conditioning $q_{n-1}=j$. Since $X_{n,j}\ge0$, the tail-integral formula gives
\[
 \E X_{n,j}^\ell
 =\ell\int_0^\infty t^{\ell-1}\Pp(X_{n,j}>t)\,dt.
\]
By \eqref{eq:tail-error},
\[
 \left|\E X_{n,j}^\ell-\ell!\right|
 \le\frac{C\ell!}{j}.
\]
Taking $\ell=1,2$ proves \eqref{eq:first-moment} and \eqref{eq:second-moment}. From \eqref{eq:tail-bounds},
\[
 \E X_{n,j}^\ell
 \le2\ell\int_0^\infty t^{\ell-1}e^{-t}\,dt
 =2\ell!,
\]
which proves \eqref{eq:all-moments}. Finally,
\[
 \E(\log q_1)^\ell
 =\sum_{k=2}^\infty\frac{(\log k)^\ell}{k(k-1)}<\infty.
\]
\end{proof}

\begin{lemma}[Negative moments]\label{lem:negative}
There exists an absolute constant $\rho\in(0,1)$ such that, for every integer $p\ge1$, $n\ge2$, and $j\ge2$,
\begin{equation}\label{eq:negative-conditional}
 \E(q_n^{-p}\mid q_{n-1}=j)\le\rho j^{-p}.
\end{equation}
Consequently,
\begin{equation}\label{eq:negative-unconditional}
 \E q_n^{-p}\le2^{-p}\rho^{n-1},
 \qquad n\ge1,
 \quad p\ge1.
\end{equation}
Here, we may take $\rho=3/4$ for convenience.
\end{lemma}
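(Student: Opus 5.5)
The plan is to compute the conditional negative moment exactly from the transition probabilities \eqref{eq:transition}, bound it by a constant multiple of $j^{-p}$, and then iterate by the tower property.

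\emph{Conditional bound.} Given $q_{n-1}=j$, we have
\[
 \E(q_n^{-p}\mid q_{n-1}=j)=(j-1)\sum_{k=j}^\infty\frac{k^{-p}}{k(k-1)} .
\]
It suffices to treat $p=1$. Indeed, $q_n\ge j$ gives $q_n^{-p}\le j^{-(p-1)}q_n^{-1}$, so any bound $\rho j^{-1}$ for $p=1$ yields $\rho j^{-p}$ for every $p\ge1$.

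For $p=1$, split off the $k=j$ term:
\[
 j\,(j-1)\sum_{k\ge j}\frac1{k^2(k-1)}=\frac{1}{j}+j(j-1)\sum_{k\ge j+1}\frac1{k^2(k-1)} .
\]
For the tail, use $\frac{1}{k^2(k-1)}\le \frac{1}{k(k-1)}\cdot\frac{1}{j+1}$ for $k\ge j+1$, together with $\sum_{k\ge j+1}\frac1{k(k-1)}=\frac1j$. This gives a tail contribution of at most $\frac{j-1}{j+1}$. Hence
\[
 \E(q_n^{-1}\mid q_{n-1}=j)\le j^{-1}\Bigl(\frac1j+\frac{j-1}{j+1}\Bigr).
\]
The bracket equals $\frac{j^2+1}{j(j+1)}$. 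At $j=2$ it is $\frac{5}{6}$, which is too large for $\rho=3/4$, so the estimate needs sharpening. I will instead use the telescoping bound $\frac{1}{k^2(k-1)}\le\frac12\bigl(\frac1{(k-1)^2}-\frac1{k^2}\bigr)$. This holds because $\frac{1}{(k-1)^2}-\frac{1}{k^2}=\frac{2k-1}{k^2(k-1)^2}\ge \frac{2(k-1)}{k^2(k-1)^2}$. Applied over $k\ge j$, it gives
\[
 \sum_{k\ge j}\frac{1}{k^2(k-1)}\le\frac1{2(j-1)^2}.
\]
Therefore
\[
 \E(q_n^{-1}\mid q_{n-1}=j)\le\frac{j}{2(j-1)}\,j^{-1}\le j^{-1}\quad\text{for } j\ge2,
\]
which is still not a contraction at $j=2$. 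So I will combine the two approaches: keep the $k=j$ term exactly and apply the telescoping bound only to $k\ge j+1$. This gives
\[
 j(j-1)\Bigl[\frac{1}{j^2(j-1)}+\frac{1}{2j^2}\Bigr]=\frac1j+\frac{j-1}{2j}\le\frac12+\frac12\cdot\frac{j-1}{j}.
\]
This is at most $\frac12+\frac14=\frac34$ at $j=2$. For larger $j$ the right-hand side creeps toward $1$, so a further refinement is needed there. Summing the tail exactly, $\sum_{k\ge m}\frac{1}{k^2(k-1)}$ is about $\frac{1}{2m^2}$. Since the weights $\frac{j-1}{k(k-1)}$ are spread geometrically over $k\in[j,\infty)$, the factor $\frac{j}{k}$ averages to roughly $\frac12$, and this is the true source of the contraction for large $j$. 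A sharp comparison of the sum with $\int_j^\infty x^{-3}\,dx$ will give a bracket of $\frac12+O(1/j)$. Finitely many small $j$ can then be checked directly. I expect this uniform-in-$j$ verification that the supremum is at most $3/4$ to be the only real obstacle. If the constant $3/4$ proves awkward, I will settle for some $\rho<1$, which is all that is stated to be needed.

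\emph{Unconditional bound.} By \eqref{eq:initial}, $\E q_1^{-p}\le 2^{-p}$ since $q_1\ge2$, which is the case $n=1$. By the Markov property and the conditional bound, $\E q_n^{-p}=\E\,\E(q_n^{-p}\mid q_{n-1})\le\rho\,\E q_{n-1}^{-p}$. Induction on $n$ then gives \eqref{eq:negative-unconditional}.
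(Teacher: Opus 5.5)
As written, your proposal leaves the uniform-in-$j$ contraction unproved. The gap is self-inflicted, though, because your third estimate already proves the lemma with $\rho=3/4$. You obtained $j\,\E(q_n^{-1}\mid q_{n-1}=j)\le\frac1j+\frac{j-1}{2j}$ and then weakened $\frac1j$ to $\frac12$. That weakening is the only reason the bound seemed to drift toward $1$. In fact
\[
 \frac1j+\frac{j-1}{2j}=\frac{j+1}{2j}=\frac12+\frac1{2j}\le\frac34,\qquad j\ge2,
\]
since this is decreasing in $j$ and equals $\frac34$ at $j=2$. No comparison with $\int_j^\infty x^{-3}\,dx$ and no case check of small $j$ is needed. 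Your second estimate would also suffice: its bracket $\frac{j}{2(j-1)}$ is at most $\frac34$ for $j\ge3$, and the $j=2$ value comes from the third estimate. Replace the closing discussion of the conditional bound by this one-line simplification. Your reduction to $p=1$ (via $q_n^{-p}\le j^{-(p-1)}q_n^{-1}$, which is the paper's $Z_j^p\le Z_j$) and your induction for the unconditional bound are correct and match the paper.

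With that fix, your argument and the paper's differ only in how $\sum_{k\ge j}\frac1{k^2(k-1)}$ is estimated.
\begin{itemize}
\item The paper uses the partial fractions $\frac1{k^2(k-1)}=\frac1{k-1}-\frac1k-\frac1{k^2}$ to get the exact identity $\E Z_j=j-j(j-1)\sum_{k\ge j}k^{-2}$, where $Z_j=j/q_n$ given $q_{n-1}=j$. It then applies the convexity (trapezoidal) lower bound $\sum_{k\ge j}k^{-2}\ge\frac1j+\frac1{2j^2}$.
\item You keep the $k=j$ term exactly and bound the remaining tail from above with the telescoping majorant $\frac1{k^2(k-1)}\le\frac12\bigl(\frac1{(k-1)^2}-\frac1{k^2}\bigr)$.
\end{itemize}
The two routes give exactly the same bound $\frac12+\frac1{2j}$. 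Yours is slightly more elementary, using pure telescoping and no convexity. The paper's exact identity shows directly where the slack in the estimate lies.
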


\begin{proof}
Under the conditioning $q_{n-1}=j$, put $Z_j=j/q_n$. Since $0<Z_j\le1$, $Z_j^p\le Z_j$ for $p\ge1$. Therefore
\begin{equation}\label{eq:negative-via-first}
 \E(q_n^{-p}\mid q_{n-1}=j)
 =j^{-p}\E Z_j^p
 \le j^{-p}\E Z_j.
\end{equation}
By \eqref{eq:transition},
\begin{align*}
 \E Z_j
 &=j(j-1)\sum_{k=j}^\infty\frac1{k^2(k-1)}\\
 &=j(j-1)\sum_{k=j}^\infty
 \left(\frac1{k-1}-\frac1k-\frac1{k^2}\right)\\
 &=j-j(j-1)\sum_{k=j}^\infty\frac1{k^2}.
\end{align*}

For $j\ge2$, convexity of $x\mapsto x^{-2}$ gives
\[
 \sum_{k=j}^\infty\frac1{k^2}
 \ge\int_j^\infty x^{-2}\,dx+\frac1{2j^2}
 =\frac1j+\frac1{2j^2}.
\]
Hence
\[
 \E Z_j\le\frac12+\frac1{2j}\le\frac34.
\]
 Thus \eqref{eq:negative-conditional} follows from \eqref{eq:negative-via-first}. Iteration and $q_1\ge2$ yield \eqref{eq:negative-unconditional}.
\end{proof}

\subsection{Martingale decomposition and conditional Bernstein condition}

For $j\ge2$, define
\[
 \mu(j):=\E(\log r_k\mid q_{k-1}=j),
 \qquad
 \varepsilon(j):=1-\mu(j),
\]
where time homogeneity makes the definition independent of $k\ge2$. Put
\[
 \mu_1:=\E\log q_1,
 \qquad
 D_1:=\log q_1-\mu_1,
 \qquad
 D_k:=\log r_k-\mu(q_{k-1}),\quad k\ge2.
\]
Then $(D_k,\F_k)_{k\ge1}$ is a martingale difference sequence. Define
\begin{equation}\label{eq:mart-defs}
 M_n:=\sum_{k=1}^nD_k,
 \qquad
 R_n:=\sum_{k=2}^n\varepsilon(q_{k-1}),
 \qquad
 c_1:=\mu_1-1.
\end{equation}
By \eqref{eq:logsum},
\begin{equation}\label{eq:decomposition}
 \log q_n-n=M_n-R_n+c_1.
\end{equation}
Lemma~\ref{lem:moments} gives
\begin{equation}\label{eq:epsilon-bound}
 |\varepsilon(j)|\le\frac{C}{j},\qquad j\ge2.
\end{equation}

The next lemma gives exponential control of the correction term $R_n$.

\begin{lemma}\label{lem:drift}
Let
\begin{equation}\label{eq:Ainfty}
 A_\infty:=\sum_{m=1}^\infty q_m^{-1}.
\end{equation}
Then $A_\infty<\infty$ almost surely and $A_\infty\in L^p$, $1\le p<\infty$. Moreover, there exist constants $C_0,\lambda_0>0$ such that
\begin{equation}\label{eq:exp-moment-A}
 \E e^{\lambda_0A_\infty}<\infty,
 \qquad
 \Pp(A_\infty>u)\le C_0e^{-\lambda_0u},\quad u\ge0.
\end{equation}
In particular,
\begin{equation}\label{eq:Rn-dominated}
 |R_n|\le CA_\infty,
 \qquad n\ge1.
\end{equation}
\end{lemma}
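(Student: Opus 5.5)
Our plan is to get exponential moments of $A_\infty$ from the geometric decay of the negative moments in Lemma~\ref{lem:negative}, using a multiplicative (rather than additive) structure. We first note that $q_m\ge 2$ and that $q_m$ grows roughly like $e^m$. Finiteness and $L^p$ membership would already follow from Minkowski: $\|q_m^{-1}\|_p\le(\E q_m^{-p})^{1/p}\le \frac12\rho^{(m-1)/p}$ by \eqref{eq:negative-unconditional}, and this is summable in $m$. For exponential moments, though, Minkowski gives constants that grow too fast in $p$, so we argue differently.

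\textbf{Exponential moment.} Since $q_m\ge q_1\prod_{k=2}^m r_k$ and $r_k\ge1$, we write $q_m^{-1}=q_1^{-1}\prod_{k=2}^m r_k^{-1}$. We plan to show directly that $\E e^{\lambda A_\infty}<\infty$ for small $\lambda>0$ via the truncations $A^{(N)}=\sum_{m=1}^N q_m^{-1}$. The key idea is to consider, for $j\ge2$ and $a\in[0,\lambda]$, the functions
\[
 \phi_N(j,a):=\E\Bigl(\exp\Bigl(a j\sum_{m=n}^{n+N} q_m^{-1}\Bigr)\Bigm| q_{n-1}=j\Bigr),
\]
which are independent of $n$ by homogeneity. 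Then we aim to show $\sup_N\sup_{j}\phi_N(j,a)\le K$ by backward induction, using the bound $\E(\cdot\mid\cdot)$ on one step together with $\E(j/q_n\mid q_{n-1}=j)\le 3/4$. A simpler route is to use the Markov property to peel off one term:
\[
 \E\bigl(e^{\lambda\sum_{m\ge n}q_m^{-1}}\mid \F_{n-1}\bigr)
 \le \E\bigl(e^{\lambda q_n^{-1}}\, \E(e^{\lambda\sum_{m>n}q_m^{-1}}\mid\F_n)\mid\F_{n-1}\bigr).
\]
We then conjecture the bound $\E(e^{\lambda\sum_{m\ge n}q_m^{-1}}\mid q_{n-1}=j)\le e^{\beta\lambda/j}$ for a suitable constant $\beta$ (say $\beta=8$) and $\lambda\le\lambda_0$. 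Inductively, the right-hand side is at most $\E(e^{\lambda(1+\beta)/q_n}\mid q_{n-1}=j)$. Since $\lambda(1+\beta)/q_n\le\lambda(1+\beta)/2$ is bounded, we use $e^u\le 1+u e^{u}$ and Lemma~\ref{lem:negative} with $p=1$ to get
\[
 \le 1+\lambda(1+\beta)e^{\lambda(1+\beta)/2}\tfrac{3}{4j}\le e^{\beta\lambda/j},
\]
provided $(1+\beta)\frac34 e^{\lambda_0(1+\beta)/2}\le\beta$, which holds for $\beta=8$ and small $\lambda_0$. We make this rigorous on finite sums $\sum_{m=n}^{N}$, starting the induction from $N$ downward (the empty sum is trivial), and then pass to the limit by monotone convergence. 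Taking $n=1$ with $q_0=1$ (the initial law \eqref{eq:initial} coincides with \eqref{eq:transition} for $j=1$, and the computation of $\E Z_j$ for $j=1$ is similarly bounded) gives $\E e^{\lambda_0 A_\infty}<\infty$. If the $j=1$ case is awkward, we instead bound $e^{\lambda_0 q_1^{-1}}\le e^{\lambda_0/2}$ and condition on $q_1$. The tail bound in \eqref{eq:exp-moment-A} then follows by Markov's inequality, and all $L^p$ moments follow as well.

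\textbf{Bound on $R_n$.} By \eqref{eq:epsilon-bound}, $|R_n|\le\sum_{k=2}^n C q_{k-1}^{-1}\le CA_\infty$, which is \eqref{eq:Rn-dominated}.

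The main obstacle is choosing the inductive hypothesis so that it closes. The constant $\rho=3/4<1$ in Lemma~\ref{lem:negative} is exactly what makes $\beta$ exist.
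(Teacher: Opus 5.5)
Your proof is correct, but it takes a different route from the paper. The paper stays with Minkowski: $\|A_\infty\|_p\le\frac12\sum_{m\ge1}\rho^{(m-1)/p}=\frac{1}{2(1-\rho^{1/p})}\le Cp$, since $1-\rho^{1/p}\ge c/p$. Linear growth of the $L^p$ norms is exactly sub-exponential behaviour: $\E A_\infty^p\le(Cp)^p\le(Ce)^pp!$, and summing the exponential series gives $\E e^{\lambda A_\infty}<\infty$ for small $\lambda$. So your remark that Minkowski's constants ``grow too fast in $p$'' is mistaken. That bound is precisely what the paper uses, at the cost of needing Lemma~\ref{lem:negative} for every $p$.

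Your backward induction proves the stronger, state-dependent bound $\E\bigl(e^{\lambda\sum_{m\ge n}q_m^{-1}}\mid q_{n-1}=j\bigr)\le e^{\beta\lambda/j}$ and uses only the case $p=1$. It shows transparently that $\rho<1$ is what permits a $\beta$ with $\rho(1+\beta)<\beta$, and it gives an explicit $\lambda_0$. The closing inequality does hold as you claim: with $\beta=8$, the condition $6.75\,e^{4.5\lambda}\le 8$ is satisfied for $\lambda\le 0.03$.

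One small correction concerns the first step. The initial law \eqref{eq:initial} does not coincide with \eqref{eq:transition} at $j=1$, since that formula vanishes there. It coincides with the transition law from $j=2$. Your fallback of bounding $e^{\lambda q_1^{-1}}\le e^{\lambda/2}$ and conditioning on $q_1$ handles this step correctly in any case.
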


\begin{proof}
By Minkowski's inequality and Lemma~\ref{lem:negative}, for every $p\ge1$,
\[
 \|A_\infty\|_p
 \le\sum_{m=1}^\infty\|q_m^{-1}\|_p
 \le\frac12\sum_{m=1}^\infty\rho^{(m-1)/p}
 =\frac1{2(1-\rho^{1/p})}.
\]
Since $\rho\in(0,1)$, $1-\rho^{1/p}\ge c/p$, and therefore
\begin{equation}\label{eq:Ap-bound}
 \|A_\infty\|_p\le Cp,
 \qquad p\ge1.
\end{equation}
This proves the asserted almost-sure and $L^p$ convergence. For integer $p\ge1$,
\[
 \E A_\infty^p\le(Cp)^p\le(Ce)^pp!.
\]
Consequently, $\E e^{\lambda A_\infty}<\infty$ for sufficiently small $\lambda>0$, and the tail bound follows from Chernoff's inequality. Finally, \eqref{eq:Rn-dominated} follows from \eqref{eq:epsilon-bound}.
\end{proof}

Define
\begin{equation}\label{eq:snVn}
 s_n^2:=\E M_n^2=\sum_{k=1}^n\E D_k^2,
 \qquad
 V_n^2:=\sum_{k=1}^n\E(D_k^2\mid\F_{k-1}).
\end{equation}
The next lemma gives variance estimates for the martingale $M_n$.

\begin{lemma}\label{lem:variance}
There exist absolute constants $c_0,C_0,C_1,C_2>0$ such that, for every $n\ge1$,
\begin{align}
 |V_n^2-n|&\le C_0(1+A_\infty)\quad\text{a.s.},\label{eq:Vn-n}\\
 |s_n^2-n|&\le C_1,\label{eq:sn-n}\\
 c_0\le\E(D_k^2\mid\F_{k-1})&\le C_2
 \quad\text{a.s. for every }k\ge1.\label{eq:var-bounds}
\end{align}
In particular, $s_n^2\asymp n$.
\end{lemma}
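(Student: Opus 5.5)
The plan is to compute the conditional variance $\E(D_k^2\mid\F_{k-1})$ explicitly as a function of $q_{k-1}$ for $k\ge2$, and then sum these. For $k\ge2$, on $\{q_{k-1}=j\}$ we have
\[
\E(D_k^2\mid\F_{k-1})=\E((\log r_k)^2\mid q_{k-1}=j)-\mu(j)^2=:v(j).
\]
By \eqref{eq:first-moment} and \eqref{eq:second-moment}, $\mu(j)=1+O(1/j)$ and the second moment equals $2+O(1/j)$. Hence $|v(j)-1|\le C/j$, with $C$ absolute. For $k=1$, $\E(D_1^2\mid\F_0)=\operatorname{Var}(\log q_1)$, a finite positive constant by Lemma~\ref{lem:moments}; it is positive because $\log q_1$ is nondegenerate.

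For \eqref{eq:Vn-n}, write
\[
V_n^2-n=\bigl(\operatorname{Var}(\log q_1)-1\bigr)+\sum_{k=2}^n\bigl(v(q_{k-1})-1\bigr).
\]
The sum is bounded in absolute value by $C\sum_{m\ge1}q_m^{-1}=CA_\infty$. This gives $|V_n^2-n|\le C_0(1+A_\infty)$ almost surely.

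For \eqref{eq:sn-n}, use $s_n^2=\E V_n^2$. Taking expectations in the display above and using $\E q_m^{-1}\le\frac12\rho^{m-1}$ from Lemma~\ref{lem:negative} (equivalently $\E A_\infty<\infty$ from Lemma~\ref{lem:drift}) yields $|s_n^2-n|\le C_1$ uniformly in $n$.

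The main point requiring care is \eqref{eq:var-bounds}. The upper bound is immediate: $v(j)\le\E((\log r_k)^2\mid q_{k-1}=j)\le 4$ by \eqref{eq:all-moments}, and the $k=1$ term is a fixed constant. For the lower bound, $v(j)\ge 1-C/j\ge 1/2$ once $j\ge j_0:=2C$. For the finitely many $j\in\{2,\dots,j_0\}$, I argue that $v(j)>0$ because $\log r_k$ given $q_{k-1}=j$ is nondegenerate: it takes the values $0$ (when $q_k=j$, probability $1/j$) and $\log((j+1)/j)$ with positive probability. The minimum of finitely many positive numbers is positive. Taking $c_0$ as the minimum of these values, $1/2$, and $\operatorname{Var}(\log q_1)$ gives a uniform lower bound.

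Finally, $s_n^2\asymp n$ follows either from \eqref{eq:var-bounds} summed over $k$, giving $c_0n\le s_n^2\le C_2n$, or from \eqref{eq:sn-n} for large $n$.
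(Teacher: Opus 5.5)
Your proposal is correct and follows essentially the same route as the paper. You expand the conditional variance with Lemma~\ref{lem:moments} to get $1+O(q_{k-1}^{-1})$, sum against $A_\infty$, and take expectations; for the lower bound you combine the large-$j$ asymptotics with non-degeneracy at the finitely many remaining states. Two small refinements go slightly beyond the paper's sketch: you give an explicit witness for non-degeneracy, and you derive $s_n^2\asymp n$ by summing \eqref{eq:var-bounds}, which also covers small $n$ directly.
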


\begin{proof}
For $k\ge2$, Lemma~\ref{lem:moments} gives, conditionally on $q_{k-1}=j$,
\[
 \E(\log r_k\mid q_{k-1}=j)=1+O(j^{-1}),
 \qquad
 \E((\log r_k)^2\mid q_{k-1}=j)=2+O(j^{-1}),
\]
with constants independent of $k$ and $j$. Therefore
\begin{equation}\label{eq:cond-var}
 \E(D_k^2\mid\F_{k-1})
 =1+O(q_{k-1}^{-1}),
 \qquad k\ge2.
\end{equation}
Summation gives
\[
 |V_n^2-n|
 \le|\E D_1^2-1|+C\sum_{k=2}^nq_{k-1}^{-1}
 \le C(1+A_\infty),
\]
which proves \eqref{eq:Vn-n}. Taking expectations and using Lemma~\ref{lem:drift} gives \eqref{eq:sn-n}, and hence $s_n^2\asymp n$.

The upper bound in \eqref{eq:var-bounds} follows from Lemma~\ref{lem:moments}. By \eqref{eq:cond-var}, the conditional variance is at least $1/2$ for all sufficiently large states $j$. For each of the finitely many remaining states, the conditional law in \eqref{eq:transition} is non-degenerate, so its variance is strictly positive. Taking a finite minimum gives the lower bound for $k\ge2$. The case $k=1$ is also non-degenerate.
\end{proof}

With the above estimates, we can now verify the conditional Bernstein condition, which is
crucial for proving the Cram\'er-type moderate deviation results for martingales.

\begin{lemma}[Conditional Bernstein condition]\label{lem:bernstein}
There exists an absolute constant $H>0$ such that, for every integer $m\ge3$ and every $k\ge1$,
\begin{equation}\label{eq:Bernstein}
 \E(|D_k|^m\mid\F_{k-1})
 \le\frac{m!}{2}H^{m-2}\E(D_k^2\mid\F_{k-1})
 \quad\text{a.s.}
\end{equation}
\end{lemma}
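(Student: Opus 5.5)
The idea is to bound the conditional absolute moments of $D_k$ by a geometric-factorial quantity, using Lemma~\ref{lem:moments}, and then absorb the conditional variance via its lower bound $c_0$ from \eqref{eq:var-bounds}. We split into the cases $k\ge2$ and $k=1$.

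\emph{The case $k\ge2$.} Conditionally on $q_{k-1}=j$ we have $D_k=\log r_k-\mu(j)$. Here $\log r_k\ge0$, and $0\le\mu(j)\le 2$ by \eqref{eq:all-moments} with $\ell=1$. The elementary inequality $|a-b|^m\le 2^{m-1}(|a|^m+|b|^m)$ then gives
\[
 \E(|D_k|^m\mid\F_{k-1})\le 2^{m-1}\bigl(2\,m!+2^m\bigr)\le C\,4^m\,m!,
\]
again using \eqref{eq:all-moments}. Since $\E(D_k^2\mid\F_{k-1})\ge c_0$ by \eqref{eq:var-bounds}, it follows that
\[
 \E(|D_k|^m\mid\F_{k-1})\le \frac{C4^m}{c_0}\,m!\,\E(D_k^2\mid\F_{k-1}).
\]
It remains to choose $H$ with $C4^m/c_0\le \tfrac12H^{m-2}$ for all $m\ge3$. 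Writing $4^m=16\cdot4^{m-2}$, it suffices that $H\ge4$ and $H\ge 32C/c_0$; indeed $H^{m-2}\ge H\cdot4^{m-3}$ when $m\ge3$, which makes the constant uniform in $m$. So take $H=\max\{4,\,64C/c_0\}$, for instance.

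\emph{The case $k=1$.} Here $\F_0$ is trivial, so the claim is simply $\E|D_1|^m\le \frac{m!}{2}H^{m-2}\E D_1^2$. I would use $\Pp(\log q_1>t)=\Pp(q_1>e^t)\le 1/\lfloor e^t\rfloor\le 2e^{-t}$, which follows from \eqref{eq:initial} in the same way as in Lemma~\ref{lem:tail} with $j=1$. As in Lemma~\ref{lem:moments}, this yields $\E(\log q_1)^m\le 2\,m!$, and $\mu_1\le2$. The argument above then applies verbatim, with $\E D_1^2>0$ by non-degeneracy. Alternatively, one could enlarge $H$ to absorb this single case.

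The only real subtlety is the uniformity of the constant in $m$: the factor $2^m$ produced by centering has to be absorbed into $H^{m-2}$ rather than into an $m$-dependent constant. This works because the moment bounds grow like $m!\,c^m$, i.e.\ the tails are exponential, which is exactly what \eqref{eq:tail-bounds} provides.
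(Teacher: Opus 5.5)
Your proof is correct and is essentially the paper's argument: you centre via $|a-b|^m\le 2^{m-1}(|a|^m+|b|^m)$, use \eqref{eq:all-moments} to get a bound of the form $C^m m!$, divide by the lower bound $c_0$ from \eqref{eq:var-bounds}, and handle $k=1$ via $\Pp(\log q_1>t)\le 2e^{-t}$. The one slip is arithmetic and does not affect the existence of $H$: at $m=3$ you need $64C/c_0\le H/2$, so your explicit choice should be $H=\max\{4,128C/c_0\}$, not $\max\{4,64C/c_0\}$, and your intermediate condition $H\ge 32C/c_0$ is likewise too weak.
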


\begin{proof}
For $k\ge2$, write $X=\log r_k$ and $\mu=\E(X\mid\F_{k-1})$. Lemma~\ref{lem:moments} gives $|\mu|\le C$ and $\E(X^m\mid\F_{k-1})\le2m!$. Hence
\[
 \E(|D_k|^m\mid\F_{k-1})
 \le2^{m-1}\{\E(X^m\mid\F_{k-1})+|\mu|^m\}
 \le C_1^m m!.
\]
By Lemma~\ref{lem:variance}, $\E(D_k^2\mid\F_{k-1})\ge c_0$. Choosing $H$ sufficiently large gives \eqref{eq:Bernstein}. For $k=1$, the bound $\Pp(\log q_1>t)\le2e^{-t}$ gives the same factorial moment estimate after centering, and the same conclusion follows after increasing $H$ if necessary.
\end{proof}

\section{Proofs of Theorem \ref{thm:main-cramer} and its corollaries}\label{sec:proofsmain}

To apply the Cram\'er-type moderate deviation results for martingales, we need the following lemma, which corresponds to condition \eqref{eq:FS-A2} in Appendix.

\begin{lemma}[Concentration of the predictable quadratic variation]\label{lem:qv}
There exist constants $C,c>0$ such that, for every $n\ge2$ and $u>0$,
\begin{equation}\label{eq:qv-concentration}
 \Pp\left(\left|\frac{V_n^2}{s_n^2}-1\right|>u\right)
 \le C e^{-cnu^2}.
\end{equation}
\end{lemma}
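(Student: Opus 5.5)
We deduce \eqref{eq:qv-concentration} from the almost sure bound \eqref{eq:Vn-n}, the variance bound \eqref{eq:sn-n}, and the exponential tail \eqref{eq:exp-moment-A} of $A_\infty$. No new probabilistic input is needed.

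First we compare $V_n^2$ with $s_n^2$. By \eqref{eq:Vn-n} and \eqref{eq:sn-n},
\[
 |V_n^2-s_n^2|\le|V_n^2-n|+|s_n^2-n|\le C_0(1+A_\infty)+C_1\le C(1+A_\infty)\quad\text{a.s.}
\]
By Lemma~\ref{lem:variance}, $s_n^2\asymp n$, say $s_n^2\ge c'n$ for $n\ge2$; the lower bound $s_n^2\ge 2c_0$ coming from \eqref{eq:var-bounds} ensures this for every $n\ge2$ and not only for large $n$. Hence
\[
 \Pp\left(\left|\frac{V_n^2}{s_n^2}-1\right|>u\right)\le\Pp\bigl(C(1+A_\infty)>c'nu\bigr).
\]

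The resulting bound is of order $e^{-cnu}$, whereas we need $e^{-cnu^2}$. We split into cases according to the size of $u$.

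For $u\le1$ we have $nu\ge nu^2$. When $c'nu/C\ge2$, the event forces $A_\infty\ge c'nu/(2C)$. By \eqref{eq:exp-moment-A} its probability is at most $C_0e^{-\lambda_0c'nu/(2C)}\le C_0e^{-cnu^2}$. When $c'nu/C<2$, we have $nu^2\le nu$, which is bounded by an absolute constant. The right side $Ce^{-cnu^2}$ is then at least $1$ once the constant $C$ is chosen large, so the bound is trivial.

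For $u>1$ the same Chernoff argument gives only $e^{-cnu}$, which is weaker than $e^{-cnu^2}$ when $u$ is large. We therefore use a deterministic fact. By \eqref{eq:var-bounds}, $V_n^2\le C_2n$ and $s_n^2\ge c'n$, so $V_n^2/s_n^2\le C_2/c'$ always. Also $V_n^2/s_n^2-1\ge-1$. Hence the event $\{|V_n^2/s_n^2-1|>u\}$ is empty for $u>\max(1,C_2/c')$. For $1<u\le\max(1,C_2/c')$ the quantity $u$ is bounded, so $u^2\le Ku$ for an absolute constant $K$, and the $e^{-cnu}$ bound converts to $e^{-c''nu^2}$.

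The only real obstacle is this mismatch between $u$ and $u^2$ in the exponent. Once we notice that the ratio $V_n^2/s_n^2$ is a priori bounded, restricting attention to bounded $u$ resolves it. The remaining step is to adjust the constants $C$ and $c$ to cover the small-$n$ and small-$nu$ cases.
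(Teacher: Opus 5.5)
Your proof is correct and follows essentially the same route as the paper. Both arguments use the deterministic bound $|V_n^2/s_n^2-1|\le C(1+A_\infty)/n$ together with the exponential tail of $A_\infty$ to get $e^{-cnu}$. Both then use the a.s.\ boundedness of $V_n^2/s_n^2$ to restrict to bounded $u$, where $nu\gtrsim nu^2$, and handle small $nu$ by enlarging the leading constant. Your split at $u=1$ is just a slightly different bookkeeping of the paper's single range $C_0/n\le u\le B+1$.
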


\begin{proof}
By Lemma~\ref{lem:variance}, $s_n^2=n+O(1)$ and
\[
 \left|\frac{V_n^2}{s_n^2}-1\right|
 =\frac{|V_n^2-s_n^2|}{s_n^2}
 \le \frac{|V_n^2-n|+|s_n^2-n|}{s_n^2}
 \le \frac{C(1+A_\infty)}{n}.
\]
Consequently,  write $C_*$ for the constant in the preceding deterministic bound and set $C_0:=2C_*$.  If $u\ge C_0/n$ and
\[
 \left|\frac{V_n^2}{s_n^2}-1\right|>u,
\]
then $C_*(1+A_\infty)/n>u$, and hence
\[
 A_\infty>\frac{nu}{C_*}-1
 \ge \frac{nu}{2C_*}.
\]
Therefore, by Lemma~\ref{lem:drift},
\begin{equation}\label{eq:qv-linear-tail}
 \Pp\left(\left|\frac{V_n^2}{s_n^2}-1\right|>u\right)
 \le \Pp\left(A_\infty>\frac{nu}{2C_*}\right)
 \le C_1e^{-c_1nu},
\end{equation}
where one may take $c_1=\lambda_0/(2C_*)$.

The upper variance bound in \eqref{eq:var-bounds}, together with $s_n^2\asymp n$, gives a deterministic constant $B<\infty$ such that
\[
 0\le \frac{V_n^2}{s_n^2}\le B
 \qquad\text{a.s. for all }n.
\]
Thus the probability in \eqref{eq:qv-concentration} is zero when $u>B+1$. For {$C_0/n\le u\le B+1$, the inequality $u^2\le(B+1)u$ gives $nu\ge nu^2/(B+1)$}. Hence \eqref{eq:qv-linear-tail} yields
\[
 \Pp\left(\left|\frac{V_n^2}{s_n^2}-1\right|>u\right)
 \le C_1\exp\left\{-\frac{c_1}{B+1}nu^2\right\}.
\]
Finally, if {$0<u<C_0/n$, then $nu^2<C_0^2/n\le C_0^2/2$ because $n\ge2$. Thus $e^{-cnu^2}\ge e^{-cC_0^2/2}$. Since the probability on the left is at most $1$, the desired estimate follows by choosing the leading constant $C\ge e^{cC_0^2/2}$}. This proves \eqref{eq:qv-concentration} for every $u>0$.
\end{proof}

We are now ready to derive the Cram\'er-type moderate deviation and Berry--Esseen bounds for the martingale
$M_n/s_n$.

\begin{proposition}\label{prop:martingale-results}
Let $Z_n=M_n/s_n$. There exists an absolute constant $C>0$ such that, for every positive sequence $b_n=o(\sqrt n)$, for all sufficiently large $n$, uniformly for $0\le x\le b_n$,
\begin{align}
 \left|\log\frac{\Pp(Z_n>x)}{\bPhi(x)}\right|
 \le C\Delta_n(x),\label{eq:mart-cramer-upper}
 \end{align}
 and
 \begin{align}
 \left|\log\frac{\Pp(Z_n<-x)}{\bPhi(x)}\right|
 \le C\Delta_n(x).\label{eq:mart-cramer-lower}
\end{align}
Moreover,
\begin{equation}\label{eq:mart-BE}
 \sup_{x\in\R}|\Pp(Z_n\le x)-\Phi(x)|
 \le C\frac{\log n}{\sqrt n}.
\end{equation}
\end{proposition}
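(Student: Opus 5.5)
The plan is to apply the Fan--Shao martingale results from the Appendix directly to the normalized differences $\xi_{k}:=D_k/s_n$, $1\le k\le n$. Their framework, as announced in the introduction, requires two hypotheses. The first is a conditional Bernstein condition of the form $\E(|\xi_k|^m\mid\F_{k-1})\le\frac{m!}{2}\epsilon_n^{m-2}\E(\xi_k^2\mid\F_{k-1})$. The second is an exponential concentration of the normalized predictable quadratic variation, $\Pp(|\sum_k\E(\xi_k^2\mid\F_{k-1})-1|>u)\le C e^{-cu^2/\delta_n^2}$ (condition \eqref{eq:FS-A2}). Under these, the appendix lemmas give, for $0\le x=o(\min\{\epsilon_n^{-1},\delta_n^{-1}\})$,
\[
\Bigl|\log\frac{\Pp(Z_n>x)}{\bPhi(x)}\Bigr|\le C\bigl(x^3(\epsilon_n+\delta_n)+(1+x)(\delta_n+\epsilon_n|\log\epsilon_n|)\bigr),
\]
the symmetric statement for $-Z_n$, and the Berry--Esseen bound $C(\delta_n+\epsilon_n|\log\epsilon_n|)$ of Lemma~\ref{lem:FS-BE}. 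So the work consists of identifying $\epsilon_n$ and $\delta_n$ and checking that the resulting error matches $\Delta_n(x)$.

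For the Bernstein parameter, divide \eqref{eq:Bernstein} by $s_n^m$. This gives $\E(|\xi_k|^m\mid\F_{k-1})\le\frac{m!}{2}(H/s_n)^{m-2}\E(\xi_k^2\mid\F_{k-1})$, so $\epsilon_n=H/s_n\asymp n^{-1/2}$ by Lemma~\ref{lem:variance}. For the quadratic variation, $\sum_k\E(\xi_k^2\mid\F_{k-1})=V_n^2/s_n^2$. Lemma~\ref{lem:qv} gives the bound $Ce^{-cnu^2}$, which is exactly the required form with $\delta_n\asymp n^{-1/2}$ (absorbing $c$ into the constant). Hence $\epsilon_n+\delta_n\asymp n^{-1/2}$ and $\epsilon_n|\log\epsilon_n|\asymp(\log n)/\sqrt n$.

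Substituting, the error term becomes $C\bigl(x^3/\sqrt n+(1+x)\log n/\sqrt n\bigr)=C\Delta_n(x)$, which gives \eqref{eq:mart-cramer-upper} and \eqref{eq:mart-cramer-lower}. The range condition $x\le b_n=o(\sqrt n)=o(\min\{\epsilon_n^{-1},\delta_n^{-1}\})$ holds for all large $n$. Here one must check that the constant $C$ in the appendix is uniform, depending only on the constants in the hypotheses and not on the sequence $b_n$; only the threshold ``$n$ sufficiently large'' depends on $b_n$. The lower tail follows by applying the same result to the martingale $-M_n$, whose differences satisfy identical conditional moment bounds. The Berry--Esseen bound \eqref{eq:mart-BE} is immediate from Lemma~\ref{lem:FS-BE} with the same $\epsilon_n,\delta_n$ for $n\ge3$; small $n$ are trivial after enlarging $C$.

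The main obstacle is matching the exact hypotheses and normalization of the Fan--Shao theorems. Their concentration condition may be stated for $|V_n^2/s_n^2-1|$ with a specific parametrization (e.g.\ $\exp\{-u^2/\delta_n^2\}$ with leading constant $1$ or some $C$). Their Cramér bound may also require $x\ge0$ in a range like $x\le c\min\{\epsilon_n^{-1},\delta_n^{-1}\}$ rather than $o(\cdot)$. The plan is to rescale $\delta_n:=C'n^{-1/2}$ with $C'$ large so that Lemma~\ref{lem:qv} implies their condition verbatim for all $u>0$, using the trivial bound $1$ for small $u$ as in the proof of Lemma~\ref{lem:qv}. Both range formulations are then covered by $b_n=o(\sqrt n)$.
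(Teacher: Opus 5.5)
Your proposal is correct and follows the paper's proof essentially step for step: $\xi_{k,n}=D_k/s_n$, $\epsilon_n=H/s_n$ from Lemma~\ref{lem:bernstein}, $\delta_n=C_0n^{-1/2}$ from Lemma~\ref{lem:qv}, then Lemmas~\ref{lem:FS-cramer} and~\ref{lem:FS-BE} (also applied to $-\xi_{k,n}$), with $C$ enlarged for small $n$. The only slip is that you quote the Fan--Shao error terms with $\delta_n$ where the appendix has $\delta_n|\log\delta_n|$; this is harmless here, since $\delta_n|\log\delta_n|\asymp(\log n)/\sqrt n$ is still absorbed into $\Delta_n(x)$.
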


\begin{proof}
For each $n$, set
\[
 \xi_{k,n}:=\frac{D_k}{s_n},\qquad 1\le k\le n,
 \qquad
 Z_n=\sum_{k=1}^n\xi_{k,n}.
\]
Then $(\xi_{k,n},\F_k)_{1\le k\le n}$ is a martingale-difference array and
\[
 \sum_{k=1}^n\E\xi_{k,n}^2=1,
 \qquad
 \langle Z\rangle_n
 :=\sum_{k=1}^n\E(\xi_{k,n}^2\mid\F_{k-1})
 =\frac{V_n^2}{s_n^2}.
\]
By Lemma~\ref{lem:bernstein}, for every integer $m\ge3$,
\[
 \big|\E(\xi_{k,n}^m\mid\F_{k-1})\big|
 \le \E(|\xi_{k,n}|^m\mid\F_{k-1})
 \le \frac{m!}{2}\epsilon_n^{m-2}
 \E(\xi_{k,n}^2\mid\F_{k-1}),
 \qquad
 \epsilon_n:=\frac{H}{s_n}.
\]
For $m=2$ the corresponding inequality is an equality. Since $s_n^2\asymp n$, we have $\epsilon_n\le Cn^{-1/2}$ and, for all sufficiently large $n$, $\epsilon_n\le1/2$. Thus condition \eqref{eq:FS-A1} in Appendix~\ref{app:FanShao} holds.

By Lemma~\ref{lem:qv}, after increasing a constant if necessary,
\[
 \Pp(|\langle Z\rangle_n-1|>u)
 \le C\exp\{-u^2\delta_n^{-2}\},
 \qquad u>0,
 \qquad
 \delta_n:=C_0n^{-1/2}.
\]
For all sufficiently large $n$, $\delta_n\le1/2$, so condition \eqref{eq:FS-A2} also holds.

Lemma~\ref{lem:FS-cramer} now gives, uniformly for
\[
 0\le x=o\!\left(\min\{\epsilon_n^{-1},\delta_n^{-1}\}\right)
 =o(\sqrt n),
\]
\[
 \left|\log\frac{\Pp(Z_n>x)}{\bPhi(x)}\right|
 \le C\left[
 x^3(\epsilon_n+\delta_n)
 +(1+x)\{\delta_n|\log\delta_n|
 +\epsilon_n|\log\epsilon_n|\}
 \right].
\]
Since $\epsilon_n,\delta_n=O(n^{-1/2})$, the right-hand side is bounded by $C\Delta_n(x)$, proving \eqref{eq:mart-cramer-upper}. Applying the same lemma to the array $(-\xi_{k,n},\F_k)$ proves \eqref{eq:mart-cramer-lower}.

Finally, Lemma~\ref{lem:FS-BE} yields
\[
 \sup_{x\in\R}|\Pp(Z_n\le x)-\Phi(x)|
 \le C\{\epsilon_n|\log\epsilon_n|
 +\delta_n|\log\delta_n|\}
 \le C\frac{\log n}{\sqrt n},
\]
which is \eqref{eq:mart-BE}. Enlarging $C$ covers the finitely many values of $n$ excluded above.
\end{proof}

\smallskip

We now prove the main theorem.

\begin{proof}[Proof of Theorem~\ref{thm:main-cramer}]
The decomposition \eqref{eq:decomposition} can be written as
\begin{equation}\label{eq:Y-decomp}
 Y_n=\alpha_n Z_n+W_n,
 \qquad
 \alpha_n:=\frac{s_n}{\sqrt n},
 \qquad
 W_n:=\frac{-R_n+c_1}{\sqrt n}.
\end{equation}
Since $s_n^2=n+O(1)$,
\begin{equation}\label{eq:alpha}
 \alpha_n=1+O(n^{-1}).
\end{equation}
Moreover, $|R_n|\le CA_\infty$ by Lemma~\ref{lem:drift}. Hence there are constants $C,c>0$ such that
\begin{equation}\label{eq:W-tail}
 \Pp(|W_n|>t)\le Ce^{-ct\sqrt n},
 \qquad t\ge Cn^{-1/2}.
\end{equation}

Fix a sequence $b_n=o(\sqrt n)$ and consider $0\le x\le b_n$. Let
\begin{equation}\label{eq:h-def}
 L_n(x):=K(1+\log n+x^2),
 \qquad
 h_n(x):=\frac{L_n(x)}{\sqrt n},
\end{equation}
where the absolute constant $K$ will be chosen below. We divide the argument into a central range and a moderate-tail range.

\smallskip
\noindent\emph{Central range: $x\le2h_n(x)$.}
Since $b_n/\sqrt n\to0$, uniformly for $x\le b_n$,
\[
 \frac{2Kx^2}{\sqrt n}\le o(1)x.
\]
Thus, for all sufficiently large $n$, the inequality $x\le2h_n(x)$ implies
\begin{equation}\label{eq:small-x-size}
 x\le C\frac{1+\log n}{\sqrt n}.
\end{equation}
We first transfer the Berry--Esseen estimate from $Z_n$ to $Y_n$. From \eqref{eq:alpha},
\[
 \sup_{u\in\R}|\Phi(u/\alpha_n)-\Phi(u)|
 \le C|\alpha_n-1|
 \le Cn^{-1},
\]
where the first bound follows from the boundedness of $v\mapsto |v|\phi(v)$. Therefore Proposition~\ref{prop:martingale-results} gives
\begin{equation}\label{eq:scaled-mart-BE}
 \sup_{u\in\R}|\Pp(\alpha_nZ_n\le u)-\Phi(u)|
 \le C\frac{\log n}{\sqrt n}.
\end{equation}
Note that, for arbitrary random variables $U,W$ and every $\eta>0$,
\begin{equation}\label{eq:smoothing}
 \sup_{u\in\R}|\Pp(U+W\le u)-\Pp(U\le u)|
 \le \sup_{u\in\R}\Pp(u-\eta<U\le u+\eta)+\Pp(|W|>\eta).
\end{equation}
Take $U=\alpha_nZ_n$, $W=W_n$, and {$\eta=K_0(\log n)/\sqrt n$}, where $K_0$ is large. {More precisely, $K_0$ is fixed so that $\eta\ge Cn^{-1/2}$, as required in \eqref{eq:W-tail}, and $cK_0>1$, after a further increase of $K_0$, the term $n^{-cK_0}$ below is bounded by a constant multiple of $(\log n)/\sqrt n$ for every sufficiently large $n$.} The interval probability in \eqref{eq:smoothing} is at most
\[
 \Phi(u+\eta)-\Phi(u-\eta)
 +2\sup_v|\Pp(U\le v)-\Phi(v)|
 \le C\eta+C\frac{\log n}{\sqrt n},
\]
and \eqref{eq:W-tail} gives $\Pp(|W_n|>\eta)\le Cn^{-cK_0}$. Consequently,
\begin{equation}\label{eq:local-BE}
 \sup_{u\in\R}|\Pp(Y_n\le u)-\Phi(u)|
 \le C\frac{\log n}{\sqrt n}.
\end{equation}
By \eqref{eq:small-x-size}, $x=o(1)$ in the present range, so $\bPhi(x)$ is bounded below by an absolute positive constant. The estimate \eqref{eq:local-BE} also controls left limits of the distribution function, because $\Phi$ is continuous. Hence it applies to both $\Pp(Y_n\le -x)$ and $\Pp(Y_n<-x)$, and gives
\[
 \frac{\Pp(Y_n>x)}{\bPhi(x)}=1+O\!\left(\frac{\log n}{\sqrt n}\right),
 \qquad
 \frac{\Pp(Y_n<-x)}{\bPhi(x)}=1+O\!\left(\frac{\log n}{\sqrt n}\right).
\]
Since $\Delta_n(x)\ge(\log n)/\sqrt n$, the elementary inequality $|\log(1+v)|\le2|v|$ for $|v|\le1/2$ proves \eqref{eq:main-upper} and \eqref{eq:main-lower} in the central range.

\smallskip
\noindent\emph{Moderate-tail range: $x>2h_n(x)$.}
Define
\[
 z_{n,-}:=\frac{x-h_n(x)}{\alpha_n},
 \qquad
 z_{n,+}:=\frac{x+h_n(x)}{\alpha_n},
 \qquad
 p_n(x):=\Pp(|W_n|>h_n(x)).
\]
Then $z_{n,-}>0$, and the inclusions obtained from \eqref{eq:Y-decomp} give
\begin{equation}\label{eq:sandwich}
 \Pp(Z_n>z_{n,+})-p_n(x)
 \le\Pp(Y_n>x)
 \le\Pp(Z_n>z_{n,-})+p_n(x).
\end{equation}
Since $\alpha_n=1+O(n^{-1})$,
\begin{equation}\label{eq:z-shift}
 |z_{n,\pm}-x|
 \le C\left\{h_n(x)+\frac{x}{n}\right\}
 \le C\left\{\frac{1+\log n+x^2}{\sqrt n}+\frac{x}{n}\right\}.
\end{equation}
Furthermore, $h_n(x)<x/2$ and $\alpha_n\to1$, so, for all sufficiently large $n$,
\begin{equation}\label{eq:z-comparable}
\frac{x}{3}\le z_{n,-}\le z_{n,+}\le2x,
 \qquad z_{n,\pm}=o(\sqrt n).
\end{equation}

Let $\phi$ denote the standard normal density. The Gaussian hazard function satisfies
\[
 \frac{\phi(t)}{\bPhi(t)}\le C(1+t),\qquad t\ge0,
\]
by Mills' inequalities. Integrating the logarithmic derivative of $\bPhi$ between $x$ and $y$ gives
\begin{equation}\label{eq:normal-perturb}
 \left|\log\frac{\bPhi(y)}{\bPhi(x)}\right|
 \le C(1+x+|y-x|)|y-x|,
 \qquad x,y\ge0.
\end{equation}
Combining \eqref{eq:z-shift}, \eqref{eq:z-comparable}, and the elementary inequality $x^2\le1+x^3$ {for $x\geq0$}, we obtain
{the following estimates. Indeed, \eqref{eq:z-comparable} implies $|z_{n,\pm}-x|\le x$ and hence
\begin{align*}
 \left|\log\frac{\bPhi(z_{n,\pm})}{\bPhi(x)}\right|
 &\le C(1+x)|z_{n,\pm}-x|\\
 &\le C\left\{
 \frac{(1+x)(1+\log n+x^2)}{\sqrt n}
 +\frac{(1+x)x}{n}\right\}.
\end{align*}
Now $(1+x)x^2=x^2+x^3\le1+2x^3$, while $1+\log n\le C\log n$ and $n^{-1}\le(\log n)/\sqrt n$ for $n\ge3$.  The last display is therefore bounded by $C\Delta_n(x)$.  Moreover, $z_{n,\pm}\le2x$ in \eqref{eq:z-comparable}, so
\[
 \Delta_n(z_{n,\pm})
 \le \frac{8x^3}{\sqrt n}
 +\frac{(1+2x)\log n}{\sqrt n}
 \le C\Delta_n(x).
\]
Hence}
\begin{equation}\label{eq:shift-control}
 \left|\log\frac{\bPhi(z_{n,\pm})}{\bPhi(x)}\right|
 \le C\Delta_n(x),
 \qquad
 \Delta_n(z_{n,\pm})\le C\Delta_n(x).
\end{equation}
Proposition~\ref{prop:martingale-results} is applicable at $z_{n,\pm}$ because of \eqref{eq:z-comparable}. Together with \eqref{eq:shift-control}, it yields
\begin{equation}\label{eq:shifted-mart-tail}
 \bPhi(x)e^{-C\Delta_n(x)}
 \le\Pp(Z_n>z_{n,\pm})
 \le\bPhi(x)e^{C\Delta_n(x)}.
\end{equation}

It remains to show that the exceptional probability $p_n(x)$ is negligible relative to the lower bound in \eqref{eq:shifted-mart-tail}. From \eqref{eq:W-tail},
\begin{equation}\label{eq:p-bound}
 p_n(x)\le C\exp\{-cK(1+\log n+x^2)\}.
\end{equation}
Mills' lower bound gives
\[
 \bPhi(x)\ge \frac{c}{1+x}e^{-x^2/2}.
\]
Also, {uniformly for $0\le x\le b_n$,
\begin{equation}\label{eq:Delta-coarse}
 \Delta_n(x)\le \eta_nx^2+C\log n,
 \qquad \eta_n\longrightarrow0.
\end{equation}
To see this, take
\[
 \eta_n:=\frac{b_n}{\sqrt n}+\frac{2\log n}{\sqrt n},
\]
which tends to zero.  If $1\le x\le b_n$, then
\[
 \frac{x^3}{\sqrt n}\le\frac{b_n}{\sqrt n}x^2,
 \qquad
 \frac{(1+x)\log n}{\sqrt n}
 \le\frac{2x\log n}{\sqrt n}
 \le\frac{2\log n}{\sqrt n}x^2.
\]
If $0\le x<1$, then directly $\Delta_n(x)\le C(\log n)/\sqrt n\le C\log n$, proving \eqref{eq:Delta-coarse}.

Let $C$ now denote the fixed constant in the lower bound of \eqref{eq:shifted-mart-tail}.  Mills' lower bound and \eqref{eq:Delta-coarse} imply
\begin{align*}
 \bPhi(x)e^{-C\Delta_n(x)}
 &\ge \frac{c}{1+x}
 \exp\left\{-\frac{x^2}{2}-C\eta_nx^2-C_1\log n\right\}\\
 &\ge c'\exp\{-A x^2-B\log n\}
\end{align*}
for all sufficiently large $n$, where $A,B>0$ are absolute constants, here we used $\eta_n\le1$ and $\log(1+x)\le x^2/2+\log2$.  On the other hand, writing the constants in \eqref{eq:p-bound} as $C_p,c_p>0$,
\[
 p_n(x)\le C_p\exp\{-c_pKx^2-c_pK\log n-c_pK\}.
\]
Choose $K$ so large that $c_pK>A+1$ and $c_pK>B+3$.  The fixed multiplicative constants are then absorbed for all sufficiently large $n$, and we obtain
\begin{equation}\label{eq:p-negligible}
 p_n(x)\le n^{-2}\bPhi(x)e^{-C\Delta_n(x)}
\end{equation}
uniformly in the moderate-tail range.}

The upper bound in \eqref{eq:sandwich}, \eqref{eq:shifted-mart-tail}, and \eqref{eq:p-negligible} imply
\[
 \Pp(Y_n>x)\le \bPhi(x)e^{C\Delta_n(x)}
 \left\{1+n^{-2}e^{-2C\Delta_n(x)}\right\}
 \le \bPhi(x)e^{C_2\Delta_n(x)}.
\]
For the lower bound,
\[
 \Pp(Y_n>x)\ge \bPhi(x)e^{-C\Delta_n(x)}(1-n^{-2}),
\]
and $|\log(1-n^{-2})|\le Cn^{-2}\le C\Delta_n(x)$. This proves \eqref{eq:main-upper}. Applying the same argument to
\[
 -Y_n=\alpha_n(-Z_n)-W_n
\]
and using the lower-tail estimate in Proposition~\ref{prop:martingale-results} proves \eqref{eq:main-lower}.

Finally, if $x>0$, then
\[
 \Pp(Y_n\le-x)
 =\lim_{m\to\infty}\Pp\left(Y_n<-x+\frac1m\right).
\]
Apply \eqref{eq:main-lower} at $x-1/m$ and let $m\to\infty$. At $x=0$, write
\[
 \frac{\Pp(Y_n\le0)}{\bPhi(0)}
 =2-\frac{\Pp(Y_n>0)}{\bPhi(0)}.
\]
The ratio on the right is $1+O((\log n)/\sqrt n)$ by \eqref{eq:main-upper}, hence the left-hand ratio has the same form, and its logarithm is bounded by $C\Delta_n(0)$. Thus the non-strict lower-tail version also holds.
\end{proof}

Once the Cram\'er-type moderate deviation estimates for $Y_n$ are available, the moderate deviation principle and Berry--Esseen bound follow naturally, see Fan and Shao \cite{FanShao2024}. For the convenience of readers, we give the details here.

\begin{proof}[Proof of Corollary~\ref{cor:MDP}]
Set $X_n:=Y_n/a_n=(\log q_n-n)/(a_n\sqrt n)$. For every fixed $y>0$, the condition $a_n=o(\sqrt n)$ allows us to apply Theorem~\ref{thm:main-cramer} at $x=a_ny$. We obtain
\[
 \log\Pp(X_n>y)
 =\log\bPhi(a_ny)+O\{\Delta_n(a_ny)\}.
\]
Moreover,
\begin{align*}
 \frac{\Delta_n(a_ny)}{a_n^2}
 &\le C_y\left\{
 \frac{a_n}{\sqrt n}
 +\frac{\log n}{a_n\sqrt n}
 +\frac{\log n}{a_n^2\sqrt n}
 \right\}\longrightarrow0,
\end{align*}
while the Gaussian tail asymptotic implies
\[
 \frac1{a_n^2}\log\bPhi(a_ny)\longrightarrow-\frac{y^2}{2}.
\]
Hence
\begin{equation}\label{eq:MDP-tail}
 \lim_{n\to\infty}\frac1{a_n^2}\log\Pp(X_n>y)
 =-\frac{y^2}{2},
 \qquad y>0.
\end{equation}
The lower-tail part of Theorem~\ref{thm:main-cramer} similarly gives
\begin{equation}\label{eq:MDP-tail-left}
 \lim_{n\to\infty}\frac1{a_n^2}\log\Pp(X_n<-y)
 =-\frac{y^2}{2},
 \qquad y>0.
\end{equation}

We now derive the full large-deviation bounds at speed $a_n^2$. First, \eqref{eq:MDP-tail}--\eqref{eq:MDP-tail-left} imply exponential tightness: for every $L>0$,
\[
 \limsup_{n\to\infty}\frac1{a_n^2}
 \log\Pp(|X_n|>L)\le-\frac{L^2}{2},
\]
and the right-hand side tends to $-\infty$ as $L\to\infty$.

Let $F\subset\R$ be closed. If $0\in F$, then
\[
 \limsup_{n\to\infty}\frac1{a_n^2}\log\Pp(X_n\in F)
 \le0=-\inf_{x\in F}\frac{x^2}{2}.
\]
If $0\notin F$, let $r=\inf_{x\in F}|x|>0$. For any $0<\eta<r$,
\[
 F\subset(-\infty,-r]\cup[r,\infty)
 \subset\{x:|x|>r-\eta\}.
\]
Using \eqref{eq:MDP-tail}--\eqref{eq:MDP-tail-left} and the elementary rule for the logarithm of a sum yields
\[
 \limsup_{n\to\infty}\frac1{a_n^2}\log\Pp(X_n\in F)
 \le-\frac{(r-\eta)^2}{2}.
\]
Letting $\eta\downarrow0$ gives the required upper bound.

Let $G\subset\R$ be open and choose $y\in G$. If $y>0$, select $\eta\in(0,y)$ so small that $[y-\eta,y+\eta]\subset G$. Then
\[
 \Pp(X_n\in G)
 \ge \Pp(X_n>y-\eta)-\Pp(X_n>y+\eta).
\]
By \eqref{eq:MDP-tail}, the second probability is exponentially negligible relative to the first, because $(y+\eta)^2>(y-\eta)^2$. Therefore
\[
 \liminf_{n\to\infty}\frac1{a_n^2}\log\Pp(X_n\in G)
 \ge-\frac{(y-\eta)^2}{2}.
\]
Letting $\eta\downarrow0$ gives the lower bound $-y^2/2$. The case $y<0$ follows from \eqref{eq:MDP-tail-left}. If $y=0$, choose $\eta>0$ with $(-\eta,\eta)\subset G$. Equations \eqref{eq:MDP-tail} and \eqref{eq:MDP-tail-left} imply $\Pp(|X_n|\ge\eta)\to0$, and hence
\[
 \liminf_{n\to\infty}\frac1{a_n^2}\log\Pp(X_n\in G)=0.
\]
Taking the supremum over $y\in G$ proves the lower bound and completes the MDP proof.
\end{proof}

\begin{proof}[Proof of Corollary~\ref{cor:BE}]
Let $x_n=2\sqrt{\log n}$. Then $x_n=o(n^{1/6})$ and $\Delta_n(x_n)\to0$. Therefore Theorem~\ref{thm:main-cramer} and $|e^u-1|\le C|u|$ for $|u|\le1$ imply, uniformly for $0\le x\le x_n$,
\begin{align*}
 |\Pp(Y_n>x)-\bPhi(x)|
 &\le C\bPhi(x)\Delta_n(x)\\
 &\le \frac{C}{\sqrt n}
 \left\{x^3\bPhi(x)+(1+x)\bPhi(x)\log n\right\}\\
 &\le C\frac{\log n}{\sqrt n},
\end{align*}
because $\sup_{x\ge0}x^3\bPhi(x)<\infty$ and $\sup_{x\ge0}(1+x)\bPhi(x)<\infty$. If $x>x_n$, monotonicity and the main theorem at $x_n$ give
\[
 \Pp(Y_n>x)+\bPhi(x)
 \le \Pp(Y_n>x_n)+\bPhi(x_n)
 \le C\bPhi(x_n)
 \le Cn^{-2}.
\]
Thus, for every $x\ge0$,
\[
 |\Pp(Y_n\le x)-\Phi(x)|
 =|\Pp(Y_n>x)-\bPhi(x)|
 \le C\frac{\log n}{\sqrt n}.
\]

For $x<0$, put $t=-x>0$. The non-strict lower-tail estimate in Theorem~\ref{thm:main-cramer} gives the same bounds for
\[
 |\Pp(Y_n\le -t)-\bPhi(t)|
 =|\Pp(Y_n\le x)-\Phi(x)|.
\]
Combining the two half-lines proves \eqref{eq:BE-main} for all sufficiently large $n$. Enlarging $C$ covers the finitely many remaining values $3\le n<n_0$.
\end{proof}

\appendix
\section{Cram\'er-type moderate deviations for martingales}\label{app:FanShao}

For reference, we record the two consequences of Fan and Shao
\cite{FanShao2024} used in the proof. For each $n$, let
$(\xi_{i,n},\mathcal G_{i,n})_{1\le i\le n}$ be a finite
martingale-difference array. Set
\[
 S_n:=\sum_{i=1}^n\xi_{i,n},
 \qquad
 \langle S\rangle_n:=\sum_{i=1}^n
 \E(\xi_{i,n}^2\mid\mathcal G_{i-1,n}).
\]
Assume that there are numbers $\epsilon_n,\delta_n\in(0,1/2]$, with
$\epsilon_n,\delta_n\to0$, and an absolute constant $C_0$ such that
\begin{equation}\label{eq:FS-A1}
 \big|\E(\xi_{i,n}^m\mid\mathcal G_{i-1,n})\big|
 \le\frac{m!}{2}\epsilon_n^{m-2}
 \E(\xi_{i,n}^2\mid\mathcal G_{i-1,n}),
 \qquad m\ge2,\quad 1\le i\le n,
\end{equation}
and
\begin{equation}\label{eq:FS-A2}
 \Pp(|\langle S\rangle_n-1|\ge u)
 \le C_0\exp\{-u^2\delta_n^{-2}\},
 \qquad u>0.
\end{equation}

\begin{lemma}\label{lem:FS-cramer}
Under \eqref{eq:FS-A1}--\eqref{eq:FS-A2}, there is a constant $C>0$, depending only on $C_0$, such that, uniformly for
\[
 0\le x=o\!\left(\min\{\epsilon_n^{-1},\delta_n^{-1}\}\right),
\]
\begin{equation}\label{eq:FS-cramer}
 \left|\log\frac{\Pp(S_n>x)}{1-\Phi(x)}\right|
 \le C\left[
 x^3(\epsilon_n+\delta_n)
 +(1+x)\{\delta_n|\log\delta_n|
 +\epsilon_n|\log\epsilon_n|\}
 \right].
\end{equation}
The same conclusion holds with $S_n$ replaced by $-S_n$.
\end{lemma}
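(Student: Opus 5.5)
The plan is to follow the conjugate-measure (exponential tilting) method for martingales: tilt the law of $S_n$ by a positive multiplicative martingale, compute the normalising exponent using \eqref{eq:FS-A1}, and control the tilted probability with a martingale Berry--Esseen bound.

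\emph{Step 1: change of measure.} For $0\le\lambda\le c\,\epsilon_n^{-1}$, condition \eqref{eq:FS-A1} with $m\ge2$ makes $\E(e^{\lambda\xi_{i,n}}\mid\mathcal G_{i-1,n})$ finite. Put
\[
 \Psi_n(\lambda):=\sum_{i=1}^n\log\E(e^{\lambda\xi_{i,n}}\mid\mathcal G_{i-1,n}),
 \qquad
 Z_n(\lambda):=\exp\{\lambda S_n-\Psi_n(\lambda)\}.
\]
Then $Z_n(\lambda)$ is a positive martingale with mean one, and we define $d\Pp_\lambda:=Z_n(\lambda)\,d\Pp$. This gives the representation
\[
 \Pp(S_n>x)=\E_\lambda\bigl[e^{-\lambda S_n+\Psi_n(\lambda)}\mathbf 1\{S_n>x\}\bigr].
\]
Expanding the logarithm and summing the series with \eqref{eq:FS-A1} gives
\[
 \Psi_n(\lambda)=\tfrac{\lambda^2}{2}\langle S\rangle_n+O(\lambda^3\epsilon_n)\langle S\rangle_n .
\]
Under $\Pp_\lambda$ the increments $\eta_i:=\xi_{i,n}-b_i(\lambda)$ are martingale differences, where $b_i(\lambda):=\E_\lambda(\xi_{i,n}\mid\mathcal G_{i-1,n})$. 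The same expansion shows that $\sum_i b_i(\lambda)=\lambda\langle S\rangle_n\{1+O(\lambda\epsilon_n)\}$, and that the tilted conditional variance of $U_n:=\sum_i\eta_i$ equals $\langle S\rangle_n\{1+O(\lambda\epsilon_n)\}$.

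\emph{Step 2: freezing the random bracket.} This is the main obstacle. The quantities $\Psi_n(\lambda)$ and $\sum_i b_i(\lambda)$ involve the random $\langle S\rangle_n$, whereas we want deterministic $\lambda^2/2$ and $\lambda$. I would restrict to the event $E_u:=\{|\langle S\rangle_n-1|\le u\}$ with
\[
 u:=C\bigl(x\delta_n+\delta_n\sqrt{|\log\delta_n|}\bigr).
\]
On $E_u$ the replacement costs a factor $\exp\{O(\lambda^2u+\lambda^3\epsilon_n)\}$, and with $\lambda\approx x$ this is $O(x^3(\epsilon_n+\delta_n)+x\delta_n|\log\delta_n|)$. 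For the upper bound, \eqref{eq:FS-A2} gives $\Pp(E_u^c)\le C_0e^{-u^2/\delta_n^2}$. The choice of $u$ makes this at most $e^{-C^2x^2}\delta_n^{C^2}$, which is negligible relative to $\bPhi(x)\,x\,\delta_n|\log\delta_n|$. For the lower bound one must discard $E_u^c$ under $\Pp_\lambda$ rather than $\Pp$. I would handle this by H\"older's inequality, $\Pp_\lambda(E_u^c)\le(\E Z_n(\lambda)^2)^{1/2}\Pp(E_u^c)^{1/2}$. Since $\E Z_n(\lambda)^2\lesssim e^{C\lambda^2}$ on the relevant event, the large constant $C$ in $u$ absorbs this growth.

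\emph{Step 3: the tilted Gaussian integral.} Choose $\lambda=\lambda(x)$ as the solution of $\lambda\{1+O(\lambda\epsilon_n)\}=x$, so that $\lambda=x+O(x^2\epsilon_n)$. On $E_u$ the representation becomes
\[
 \Pp(S_n>x)\approx e^{-\lambda x+\lambda^2/2}\,\E_\lambda\bigl[e^{-\lambda(S_n-x)}\mathbf 1\{S_n>x\}\bigr],
\]
and $S_n-x=U_n+O(\lambda u+\lambda^2\epsilon_n)$. Under $\Pp_\lambda$ the array $(\eta_i)$ again satisfies a Bernstein condition with parameter $O(\epsilon_n)$ and a bracket concentration with parameter $O(\delta_n+\lambda\epsilon_n)$. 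A martingale Berry--Esseen bound of order $\epsilon_n|\log\epsilon_n|+\delta_n|\log\delta_n|$ (the Lemma~\ref{lem:FS-BE} type bound) therefore applies to $U_n$. Integrating $e^{-\lambda t}$ by parts against the distribution of $U_n$ reduces $\E_\lambda[\cdot]$ to $\int_0^\infty e^{-\lambda t}\,d\Phi(t)$. Because of the factor $\lambda$ from the integration by parts, the Kolmogorov error enters with relative size $(1+\lambda)$. Since
\[
 e^{\lambda^2/2}\int_0^\infty e^{-\lambda t}\phi(t)\,dt=\bPhi(\lambda),
 \qquad
 \bPhi(\lambda)/\bPhi(x)=e^{O(x^3\epsilon_n)},
\]
collecting all errors gives \eqref{eq:FS-cramer}. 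The shift between $S_n-x$ and $U_n$ is handled by \eqref{eq:normal-perturb}-type Gaussian perturbation estimates.

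The statement for $-S_n$ is the same argument applied to the array $(-\xi_{i,n})$, which satisfies \eqref{eq:FS-A1}--\eqref{eq:FS-A2} with the same parameters.
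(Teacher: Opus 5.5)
The paper gives no argument for this lemma. It quotes it as Theorem~2.2 of Fan and Shao, and your conjugate-measure plan is the standard route in that literature. Steps~1--2 can be repaired, with two fixes:
\begin{itemize}
\item you must truncate $\langle S\rangle_n$ before bounding $\E Z_n(\lambda)^2$, since its logarithm involves $\Psi_n(2\lambda)-2\Psi_n(\lambda)\approx\lambda^2\langle S\rangle_n$;
\item $\lambda$ must be deterministic, e.g.\ $\lambda=x$, with the random mismatch moved into the shift.
\end{itemize}

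The genuine gap is the Berry--Esseen input in Step~3, for two reasons.

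\emph{The tilted concentration claim is unjustified.} Condition \eqref{eq:FS-A2} controls $\langle S\rangle_n$ only under $\Pp$. Under $\Pp_\lambda$, a bracket correlated with the path (say, second-half conditional variances depending on $S_{n/2}$) is displaced by order $\lambda\delta_n$. Your H\"older device only yields a tilted concentration parameter of order $(1+\lambda)\delta_n$. Moreover, the $O(\lambda\epsilon_n)$ tilting bias of the conditional variances can be of deterministic size, e.g.\ when $\E(\xi_{i,n}^3\mid\mathcal G_{i-1,n})$ has a fixed sign. So any parameter of the form \eqref{eq:FS-A2} for $U_n$ must be at least of order $\lambda\epsilon_n$.

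\emph{Even granting it, the black-box bound loses a logarithm.} With $\delta'\asymp\delta_n+\lambda\epsilon_n$, a Kolmogorov bound $\delta'|\log\delta'|$ times your factor $1+\lambda$ produces $\lambda^2\epsilon_n|\log(\lambda\epsilon_n)|$. Take $\delta_n$ negligible and $x\asymp|\log\epsilon_n|^{1/2}$. Then the right side of \eqref{eq:FS-cramer} is of order $\epsilon_n|\log\epsilon_n|^{3/2}$, while this term is of order $\epsilon_n|\log\epsilon_n|^{2}$. So ``collecting all errors'' does not give the claim.

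The missing ingredient is a Berry--Esseen estimate for the conjugate martingale that is linear in $\lambda$, with no logarithm on the $\lambda$-terms, roughly $C\{\lambda(\epsilon_n+\delta_n)+\epsilon_n|\log\epsilon_n|+\delta_n|\log\delta_n|\}$. In the literature this is proved by rerunning a Bolthausen-type argument under $\Pp_\lambda$. That argument exploits the fact that tilting changes each conditional variance only by a relative factor $1+O(\lambda\epsilon_n)$. It does not follow from Lemma~\ref{lem:FS-BE} used as a black box.

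There is also a more basic obstruction. At $x=0$ you have $\lambda=0$, and your argument reduces to Lemma~\ref{lem:FS-BE} for $S_n$ itself. With \eqref{eq:FS-A2} in the sub-Gaussian form transcribed here, neither that lemma nor the present statement can hold.

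\emph{Counterexample.} Take a walk with steps $\pm\epsilon$. Stop at time $(1-\gamma)\epsilon^{-2}$ if the walk is positive there; otherwise stop at its first visit to $0$ or at time $(1+\gamma)\epsilon^{-2}$, whichever comes first.
\begin{itemize}
\item Condition \eqref{eq:FS-A1} holds with $\epsilon_n=\epsilon$.
\item Since $|\langle S\rangle_n-1|\le2\gamma$, condition \eqref{eq:FS-A2} holds with $\delta_n=2\gamma$ and $C_0=e$.
\item However, $\Pp(S_n>0)\approx1/2$ and $\Pp(S_n=0)\approx\sqrt{2\gamma}/\pi$, so $|\log(2\Pp(-S_n>0))|\asymp\gamma^{1/2}$.
\item When $\epsilon=\gamma^2$, this is not $O(\gamma|\log\gamma|+\epsilon|\log\epsilon|)$.
\end{itemize}

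A correct proof therefore needs a hypothesis that controls $\langle S\rangle_n-1$ at scale $\delta_n^2$ rather than $\delta_n$, for instance $\Pp(|\langle S\rangle_n-1|\ge u)\le C_0e^{-u\delta_n^{-2}}$. You should check the exact form of (A2) in Fan and Shao. For Engel's series this is presumably harmless, since \eqref{eq:qv-linear-tail} already gives a bound of this type with $\delta_n\asymp n^{-1/2}$.
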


The above lemma is Theorem~2.2 of Fan and Shao \cite{FanShao2024}. The lower-tail assertion follows by applying the theorem to $-\xi_{i,n}$. The following lemma is Theorem~4.1 of Fan and Shao \cite{FanShao2024}.

\begin{lemma}\label{lem:FS-BE}
Under \eqref{eq:FS-A1}--\eqref{eq:FS-A2}, there is a constant $C>0$, depending only on $C_0$, such that
\begin{equation}\label{eq:FS-BE}
 \sup_{x\in\R}|\Pp(S_n\le x)-\Phi(x)|
 \le C\{\epsilon_n|\log\epsilon_n|
 +\delta_n|\log\delta_n|\}.
\end{equation}
\end{lemma}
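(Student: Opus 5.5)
The plan is to deduce Lemma~\ref{lem:FS-BE} from the Cram\'er-type expansion of Lemma~\ref{lem:FS-cramer}, using the same truncation device as in the proof of Corollary~\ref{cor:BE}.

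Write $\tau_n:=\epsilon_n|\log\epsilon_n|+\delta_n|\log\delta_n|$. Since $\epsilon_n,\delta_n\in(0,1/2]$ and $t\mapsto t|\log t|$ is positive on $(0,1/2]$, there is a threshold $\tau_*>0$ such that $\tau_n\ge\tau_*$ whenever $\epsilon_n+\delta_n\ge\tau_*$. In that regime \eqref{eq:FS-BE} is trivial, because the left side is at most $1$. So we may assume $\epsilon_n+\delta_n$ is as small as we like.

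\emph{Step 1: the range $0\le x\le x_n$.} Set $x_n:=\sqrt{2|\log(\epsilon_n+\delta_n)|}$. Then $x_n=o(\min\{\epsilon_n^{-1},\delta_n^{-1}\})$, so Lemma~\ref{lem:FS-cramer} applies uniformly for $0\le x\le x_n$. On this range its right-hand side satisfies
\[
 x^3(\epsilon_n+\delta_n)+(1+x)\tau_n\le1
\]
once $\epsilon_n+\delta_n$ is small. Using $|e^u-1|\le C|u|$ for $|u|\le C$, we get
\[
 |\Pp(S_n>x)-\bPhi(x)|\le C\bPhi(x)\left[x^3(\epsilon_n+\delta_n)+(1+x)\tau_n\right]\le C\tau_n,
\]
because $\sup_{x\ge0}x^3\bPhi(x)<\infty$, $\sup_{x\ge0}(1+x)\bPhi(x)<\infty$, and $\epsilon_n\le\tau_n/\log 2$ (similarly for $\delta_n$).

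\emph{Step 2: the range $x>x_n$.} By monotonicity and Step~1 at $x_n$,
\[
 \Pp(S_n>x)+\bPhi(x)\le \Pp(S_n>x_n)+\bPhi(x_n)\le C\bPhi(x_n)\le C(\epsilon_n+\delta_n)\le C\tau_n,
\]
since $\bPhi(x_n)\le e^{-x_n^2/2}=\epsilon_n+\delta_n$.

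\emph{Step 3: negative $x$.} Apply Steps~1 and~2 to $-S_n$, which is allowed by the last sentence of Lemma~\ref{lem:FS-cramer}. This controls $\Pp(S_n<-t)$ for $t>0$. To pass to $\Pp(S_n\le -t)$, take left limits, using continuity of $\Phi$, exactly as at the end of the proof of Theorem~\ref{thm:main-cramer}.

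\emph{Main obstacle.} Lemma~\ref{lem:FS-cramer} is stated only for $x=o(\cdot)$ and for large $n$. To get a constant depending only on $C_0$, I need the quantitative form: the bound holds for $0\le x\le c\min\{\epsilon_n^{-1},\delta_n^{-1}\}$ with $c$ absolute, which is how Fan and Shao actually prove it. I would first check that form in their Theorem~2.2. If it is unavailable, I would instead prove the Berry--Esseen bound directly, via a conjugate (Cram\'er) change of measure $dQ\propto\prod_i e^{\lambda\xi_{i,n}}/\E(e^{\lambda\xi_{i,n}}\mid\mathcal G_{i-1,n})$ at $\lambda=0$. This would use a Lindeberg-type smoothing comparison with the Gaussian, with \eqref{eq:FS-A2} controlling the random normalization $\langle S\rangle_n$. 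The logarithmic factors come from truncating $|\langle S\rangle_n-1|$ at level $\delta_n\sqrt{|\log\delta_n|}$ and the conditional moments at level $\epsilon_n|\log\epsilon_n|$.
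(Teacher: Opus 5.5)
Your argument is correct, but it does not follow the paper's route. The paper gives no argument for Lemma~\ref{lem:FS-BE}; it simply imports it as Theorem~4.1 of Fan and Shao. You instead derive it as a corollary of Lemma~\ref{lem:FS-cramer}. You run at the martingale level the same truncation device the paper uses for Corollary~\ref{cor:BE}, with $x_n=\sqrt{2|\log(\epsilon_n+\delta_n)|}$ in place of $2\sqrt{\log n}$, so that $\bPhi(x_n)\le\epsilon_n+\delta_n\le\tau_n/\log 2$.

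Steps 1--3 check out:
the bracket in \eqref{eq:FS-cramer} tends to $0$ uniformly on $[0,x_n]$; $\sup_{x\ge0}(1+x)^3\bPhi(x)<\infty$ turns the relative error into an absolute error $C\tau_n$; monotonicity handles $x>x_n$; and left limits handle $\Pp(S_n\le -t)$.

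The citation gives the full statement, with $C=C(C_0)$ for every $n$, at no cost. Your route reduces the appendix to a single imported theorem. It also shows that the rate $\epsilon_n|\log\epsilon_n|+\delta_n|\log\delta_n|$ is just the $(1+x)\{\cdots\}$ term of \eqref{eq:FS-cramer} at bounded $x$. In change-of-measure proofs of such expansions, a Berry--Esseen bound under the conjugate measure is typically an ingredient. So relative to the literature your implication runs backwards; within this paper, where both lemmas are imported, that is harmless.

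The obstacle you flag is genuine, but smaller than you fear. Read literally, \eqref{eq:FS-cramer} holds only for $n\ge n_0$, with $n_0$ depending on the array. So directly you get \eqref{eq:FS-BE} with an array-dependent constant. That already suffices for Proposition~\ref{prop:martingale-results}, which enlarges $C$ anyway.

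To get $C=C(C_0)$ you do not need a linear range $x\le c\min\{\epsilon_n^{-1},\delta_n^{-1}\}$; argue by contradiction instead.
\begin{itemize}
\item Suppose no $s_0=s_0(C_0)>0$ exists such that $\epsilon+\delta\le s_0$ forces \eqref{eq:FS-BE} with the constant produced by your Steps 1--3.
\item Then pick violating rows with $\epsilon+\delta\le 1/k$ and pad them with zeros. This preserves \eqref{eq:FS-A1}--\eqref{eq:FS-A2} and the law of the sum.
\item Place these rows at increasing indices of one array, so that $\epsilon_n,\delta_n\to0$.
\item Lemma~\ref{lem:FS-cramer} applied with $b_n=x_n$, followed by your Steps 1--3, then yields the bound at all large indices, a contradiction.
\end{itemize}
Combined with your trivial regime $\epsilon+\delta>s_0$, this gives $C=C(C_0)$. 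Your fallback via a conjugate measure ``at $\lambda=0$'' (which is just $\Pp$) then becomes unnecessary, and as sketched it would not amount to a proof anyway.
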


\end{document}